\newtheorem{theorem}{Theorem}[section]
\newtheorem{lemma}[theorem]{Lemma}
\newtheorem{thm}[theorem]{Theorem}
\newtheorem{prop}[theorem]{Proposition}
\newtheorem{conj}[theorem]{Conjecture}
\newtheorem{proble}[theorem]{Problem}
\numberwithin{equation}{section}
\def\A{\mathcal{A}}
\def\B{\mathcal{B}}
\def\C{\mathcal{C}}
\def\F{\mathcal{F}}
\def\G{\mathcal{G}}
\def\HH{\mathcal{H}}
\def\P{\mathcal{P}}
\def\Q{\mathcal{Q}}
\def\S{\mathcal{S}}
\def\T{\mathcal{T}}
\def\W{\mathcal{W}}
\def\X{\mathcal{X}}
\def\w{\omega}
\def\eps{\varepsilon}
\def\xyz{\frac{s}{kn}}
\def\COMMENT#1{}
\let\COMMENT=\footnote
\title{Kleitman's conjecture about families of given size minimizing the number of $k$-chains} 
\author{J\'ozsef Balogh\footnote{Department of Mathematical Sciences,
 University of Illinois at Urbana-Champaign, Urbana, Illinois 61801, USA, {\tt
jobal@math.uiuc.edu}. Research is partially supported by NSF Grant DMS-1500121, Arnold O. Beckman Research Award (UIUC Campus Research Board 15006).
}
 ~and Adam Zsolt Wagner\footnote{University of Illinois at Urbana-Champaign, Urbana, Illinois 61801, USA, {\tt
zawagne2@illinois.edu}. }}
\begin{document}
\maketitle
\begin{abstract}
A central theorem in combinatorics is Sperner's Theorem, which determines the maximum size of a family $\F\subseteq \P(n)$ that does not contain a $2$-chain $F_1\subsetneq F_2$. Erd\H{o}s later extended this result and determined the largest family not containing a $k$-chain $F_1\subsetneq \ldots \subsetneq F_k$. Erd\H{o}s and Katona and later Kleitman asked how many such chains must appear in families whose size is larger than the corresponding extremal result.

This question was resolved for $2$-chains by Kleitman in $1966$, who showed that amongst families of size $M$ in $\P(n)$, the number of $2$-chains is minimized by a family whose sets are taken as close to the middle layer as possible. He also conjectured that the same conclusion should hold for all $k$, not just $2$. The best result on this question is due to Das, Gan and Sudakov who showed that Kleitman's conjecture holds for families whose size is at most the size of the $k+1$ middle layers of $\P(n)$, provided $k\leq n-6$. Our main result is that for every fixed $k$ and $\eps>0$, if $n$ is sufficiently large then Kleitman's conjecture holds for families of size at most $(1-\eps)2^n$, thereby establishing Kleitman's conjecture asymptotically. Our proof is based on ideas of Kleitman and Das, Gan and Sudakov. Several open problems are also given.
\end{abstract}

\section{Introduction}
Denote by $\Sigma(n,r)$ the size of the $r$ largest layers in $\P(n)$, that is, $\Sigma(n,r)=\sum_{i=\lceil \frac{n-r+1}{2}\rceil}^{\lceil \frac{n+r-1}{2}\rceil}\binom{n}{i}$. Sperner's Theorem \cite{sperner}, a cornerstone result in extremal combinatorics from $1928$, states that the size of the largest family $\F\subseteq \P(n)$ that does not contain two sets $F_1,F_2\in\F$ with $F_1\subsetneq F_2$ is $\binom{n}{\lfloor n/2 \rfloor}$. This result was extended by Erd\H{o}s~\cite{erdos}, who showed that the size of the largest family without a $k$-chain, that is, $k$ sets $F_1\subsetneq \ldots \subsetneq F_k$, is the sum of the $k-1$ largest binomial coefficients, $\Sigma(n,k-1)$.

The following natural question was first posed by Erd\H{o}s and Katona and then extended by Kleitman some fifty years ago. Given a family $\F$ of $s$ subsets of $[n]$, how many $k$-chains must $\F$ contain? We denote this minimum by $c_k(n, s)$, and determine it for a wide range of values of $k$ and $s$. For $k=2$ this question was completely resolved by Kleitman~\cite{kleitmanpairs}. We say that a family $\F\subseteq \P(n)$ is \emph{centered} if  for any two sets $A,B\subseteq [n]$ with $A\in \F$ and $B\notin \F$ we have that $|n/2-|A||\leq|n/2-|B||$, and if $|n/2-|A||=|n/2-|B||$ then we have $|A|\geq |B|$. That is, if $\F$ is constructed by ``taking sets whose size is as close to $n/2$ as possible'' (and if two layers have the same size we fill up the top one first).  Equipped with this definition Kleitman's theorem is as follows.
\begin{theorem}[\label{kleitman}Kleitman~\cite{kleitmanpairs}]
Let $n,M>0$ be integers. Amongst families $\F\subseteq \P(n)$ of size $M$, the number of $2$-chains in $\F$ is minimized by a centered family.
\end{theorem}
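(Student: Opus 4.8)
The plan is to establish the equivalent inequality $c(\F)\ge c(\C_M)$, where $c(\cdot)$ denotes the number of $2$-chains, $M=|\F|$, and $\C_M\subseteq\P(n)$ is the centered family of size $M$. The idea is to start from an arbitrary $\F$ and apply a sequence of \emph{local moves}, each of which preserves $|\F|$ and does not increase $c(\F)$, until $\F$ is transformed into a family having the same ``shape'', and hence the same number of $2$-chains, as $\C_M$.

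The fundamental move is a \emph{shift toward the middle}: if the layer $\F\cap\binom{[n]}{\ell}$ is nonempty while the neighbouring layer strictly closer to $n/2$ is not completely full, then one can delete a suitably chosen set $A$ of size $\ell$ from $\F$ and insert a suitably chosen set $B$ one step closer to $n/2$ with $B\notin\F$, without increasing the number of $2$-chains. Two auxiliary moves dispose of the boundary cases: a \emph{consolidation move}, merging two nonfull layers that sit at equal distance on opposite sides of $n/2$ into a single layer (harmless by the symmetry $i\mapsto n-i$ of $\P(n)$), and a \emph{recentering move}, which, when the block of full layers is lopsided about the middle, relocates the outermost full layer onto the empty layer symmetric to it — a short binomial inequality, using log-concavity of the binomial coefficients, shows this does not increase $c$. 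A potential such as $\bigl(\sum_{F\in\F}\bigl||F|-n/2\bigr|,\ \#\{\text{nonfull nonempty layers}\}\bigr)$ decreases lexicographically, so the process terminates; by construction the terminal family is a block of consecutive full layers placed as centrally as possible together with at most one partial layer at an extreme position, and with the tie-breaking built into consolidation this is exactly the shape of $\C_M$. A final bookkeeping check — using that the number of $2$-chains between full layers $i<j$ equals $\binom{n}{i}\binom{n-i}{j-i}$, and that a partial layer with $t$ sets contributes $t$ times an appropriate binomial factor to each full layer it meets — verifies that this shape realises precisely $c(\C_M)$.

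The main obstacle is proving that the shift move can be carried out without increasing $c$. A naive swap, for instance replacing $A$ by one of its own $(\ell-1)$-subsets, can fail: a set $B\subsetneq A$ may be comparable to sets of $\F$ that $A$ is not comparable to, so more $2$-chains are created than destroyed. Instead one must choose $A$ to lie in at least the average number of $2$-chains over all size-$\ell$ sets of $\F$, and $B$ (among the admissible sets outside $\F$) to lie in at most the corresponding average; making these two averages comparable, and ensuring that an admissible $B\notin\F$ exists in the first place, is where the Kruskal--Katona theorem, equivalently the normalized matching property of $\P(n)$, is used. Carrying this shift lemma out cleanly is the crux of the whole proof and is, in essence, Kleitman's original argument; the remaining steps above are comparatively routine.
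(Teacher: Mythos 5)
The paper cites Theorem~\ref{kleitman} from Kleitman's 1966 work without proving it; the closest it comes is the remark following Lemma~\ref{supersat}, which derives the special case $M\leq\Sigma(n,2)$ from the symmetric-chain-decomposition supersaturation inequality. Your proposal is a genuinely different route, but it has a fatal gap: the shift lemma you build the whole iteration around is false as a one-set-at-a-time statement.

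Concretely, take $n=8$ and $\F=\binom{[8]}{2}\cup\binom{[8]}{3}$, so $|\F|=84$ and $c_2(\F)=\binom{8}{3}\binom{3}{2}=168$, whereas the centered family $\G_{84}$ (all of $\binom{[8]}{4}$ together with $14$ sets of size $5$) has $c_2(\G_{84})=14\cdot 5=70$. Your shift move calls for deleting a $3$-set $A\in\F$ and inserting a $4$-set $B\notin\F$, since layer $4$ is the neighbouring layer closer to $n/2$ and is not full. But deleting $A$ destroys exactly the $3$ pairs formed with $A$'s three $2$-subsets (its supersets are absent from $\F$ and its $1$-subsets are not in $\F$), while inserting any $4$-set $B$ creates at least $9$ new pairs: $B$ has six $2$-subsets, all in $\F$, and four $3$-subsets, at least three of which remain in $\F\setminus\{A\}$. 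Thus every admissible shift strictly increases $c_2$; no choice of $A$ or $B$ --- by averaging, by Kruskal--Katona, by the normalized matching property --- can rescue the step, because the needed inequality fails for \emph{every} pair $(A,B)$. The auxiliary moves do not help either: both layers are full so no consolidation applies, and relocating $\binom{[8]}{2}$ to the symmetric layer $\binom{[8]}{6}$ raises the count to $\binom{8}{6}\binom{6}{3}=560$. So $\F$ is a strict local minimum of $c_2$ under your moves even though $c_2(\F)\gg c_2(\G_{84})$, and the potential-function iteration can never reach a centered family from it. A correct proof must manipulate the family more globally --- whole layers, or (as in Lemma~\ref{supersat} and the compression on measured subhypergraphs in this paper) whole symmetric chains at once --- and that global step is exactly the content your outline treats as routine.
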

Note that Theorem~\ref{kleitman} does not claim that centered families are the \emph{only} families achieving this minimum, which is not quite true (but close!). The families achieving minimum in Theorem~\ref{kleitman} have been completely characterized by Das--Gan--Sudakov \cite{dasgansuda}.

In the present paper we are interested in what happens for $k>2$. Kleitman conjectured that the conclusion of Theorem~\ref{kleitman} should hold for $k>2$ as well, that is, the number of $k$-chains in $\F$ is minimized if $\F$ is obtained by taking sets whose size is as close to $n/2$ as possible.

\begin{conj}[Kleitman, \cite{kleitmanconj1,kleitmanpairs}]\label{kleitmanconjecture}
Let $n,M>0$ and $k\geq 2$ be integers. Amongst families $\F\subseteq \P(n)$ of size $M$, the number of $k$-chains in $\F$ is minimized by a centered family.
\end{conj}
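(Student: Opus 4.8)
The plan is to attack Conjecture~\ref{kleitmanconjecture} by a double induction on $n$ and $k$, using Theorem~\ref{kleitman} as the base case $k=2$ and the trivial case $M\le\Sigma(n,k-1)$ (where $c_k(n,M)=0$, attained by every centered family) as a second anchor; since the complementation map $F\mapsto[n]\setminus F$ preserves the number of $k$-chains and sends centered families to centered families, we may assume throughout that $M\le 2^{n-1}$. The engine of the induction is the classical split by the element $n$, which already drives Kleitman's proof of the $k=2$ case: writing $\F=\F_0\sqcup\F_1$ with $\F_0=\{F\in\F:n\notin F\}$ and $\F_1'=\{F\setminus\{n\}:F\in\F,\ n\in F\}$, both viewed inside $\P(n-1)$ with $|\F_0|+|\F_1'|=M$, every $k$-chain of $\F$ is a $k$-chain inside $\F_0$, a $k$-chain inside $\F_1'$, or a \emph{crossing} chain which, for some $1\le j\le k-1$, restricts to a $j$-chain of $\F_0$ whose top set is contained in the bottom set of a $(k-j)$-chain of $\F_1'$. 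Thus $c_k$ obeys a recursion that couples the chain-counts of $\F_0$ and $\F_1'$ through a junction-containment condition.

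Before running the recursion I would first \emph{compress}: replace $\F$ by a family with the same layer sizes $m_i=|\F\cap\binom{[n]}{i}|$ in which each $\F\cap\binom{[n]}{i}$ is an initial segment of the colex order (so that consecutive nonempty layers are automatically nested via iterated shadows), and prove that this replacement does not increase the number of $k$-chains. This ``shifting does not increase the number of $k$-chains'' step is itself a lemma demanding care, since compressions can in principle align sets across layers and create new comparabilities; what one wants is a monotone refinement of the normalized-matching property of $\P(n)$. After compression the number of $k$-chains becomes an explicit function $\Phi_k(n;\mathbf m)$ of the profile $\mathbf m=(m_i)$ alone --- a sum over increasing length-$k$ layer sequences of a product-type count --- and, crucially, both $\F_0$ and $\F_1'$ again become colex-initial-segment families, with profiles $(\min(m_i,\binom{n-1}{i}))_i$ and $(\max(0,m_{i}-\binom{n-1}{i}))_i$ respectively, so that (via Kruskal--Katona) the junction condition in the recursion collapses to an inequality purely between these two profiles.

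The conjecture then reduces to a discrete optimization problem: minimize $\Phi_k(n;\mathbf m)$ over profiles with $\sum_i m_i=M$ and $0\le m_i\le\binom{n}{i}$, and show that the minimizer is the centered profile (full layers in a symmetric middle band plus one partial middle layer). I would prove this by an exchange argument: if $m_i<\binom{n}{i}$ for some layer $i$ closer to the centre than a layer $j$ with $m_j>0$, then transferring one unit of mass from $j$ to $i$ does not increase $\Phi_k$, because the newly present colex-minimal set then sits in a more central layer, with more present sets above and below it. Made quantitative, this is a Schur-concavity-type statement for $\Phi_k$ with respect to how ``spread out'' the profile is. To close the induction one would, however, need to prove a strengthening of the conjecture that is self-propagating through the element-$n$ split --- for instance that centered families simultaneously minimize all weighted functionals of the form $\sum_{j\text{-chains}}w(\text{top set})$ for $j\le k$ and every antitone weight $w$ --- since this is exactly the data that the crossing term in the recursion feeds back into the inductive hypothesis.

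The main obstacle, and the reason the conjecture remains open beyond the ranges covered in \cite{dasgansuda} and in the present paper, is controlling $\Phi_k$ --- or the strengthened functionals above --- when the extremal and competing profiles are spread over many more than $k$ layers. There the product-type expressions for the number of $k$-chains through a fixed tuple of layers in a colex-nested family become combinatorially unwieldy, and the exchange argument, while morally correct, is delicate to verify uniformly in the number of active layers and in $k$. Put differently, the compression step and the profile-optimization step are each clean inside a bounded band of layers around the middle but do not obviously cooperate across the full range $\Sigma(n,k-1)<M\le 2^{n-1}$; isolating a strengthening of Conjecture~\ref{kleitmanconjecture} that is at once true and stable under the element-$n$ split in this regime is, I expect, the crux of the problem.
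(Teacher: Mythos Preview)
The statement you are attempting is Conjecture~\ref{kleitmanconjecture}, which the paper does \emph{not} prove; it remains open, and the paper establishes only the asymptotic Theorem~\ref{mainthm}. Your write-up is honest about this --- you frame it as a plan and name the obstacles yourself --- so let me focus on where the plan actually breaks.

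The fatal gap is the compression step. You propose to replace each layer $\F\cap\binom{[n]}{i}$ by a colex initial segment of the same size and assert (with a caveat) that this does not increase the number of $k$-chains. This is simply false, already for $k=2$: take $n=3$, $\F=\{\{3\},\{1,2\}\}$, which contains no comparable pair; after colex-compressing each layer you get $\{\{1\},\{1,2\}\}$, which contains one. More generally, colex-compression \emph{creates} containments across layers rather than destroying them, because initial segments have maximal, not minimal, shadow overlap. So the reduction ``$c_k(\F)$ depends only on the profile $(m_i)$'' is not available, and with it the entire profile-optimization and exchange argument collapses: there is no function $\Phi_k(n;\mathbf m)$ that lower-bounds $c_k(\F)$ for all $\F$ with profile $\mathbf m$ and is attained by the colex-nested family. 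The normalized-matching property gives inequalities in the wrong direction for this purpose.

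For contrast, the paper's approach to the asymptotic version avoids any layer-by-layer compression of the family. Instead it assigns to each $k$-chain a weight equal to the probability that a uniformly random symmetric chain decomposition contains it (Proposition~\ref{weightformula}), proves a weighted supersaturation inequality (Lemma~\ref{supersat}) valid for \emph{every} family of the given size, and then works in an auxiliary space of ``measured subhypergraphs'' where a compression acts on chains rather than on sets. The passage from weighted to unweighted counts is handled by the quantitative gap between weights of chains of different heights (Propositions~\ref{weightsdecreasealot} and~\ref{weightsdecreaselittle}), and sets far from the middle are ruled out separately in Section~\ref{smalllargesetssection}. None of this goes through an element-$n$ split or a profile reduction; the weighted inequality is the substitute for the strengthened, self-propagating hypothesis you were looking for, but it is only strong enough to yield the asymptotic statement, not the full conjecture.
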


Similarly as before there may be other families minimizing the number of $k$-chains that are not centered, and Kleitman's conjecture does not say anything about them. In reality it is probably true that all minimizing families are very close to being centered, and in particular have at most two partially filled layers. We are  far from proving any such statement. 

Only little progress has been made towards Kleitman's conjecture so far. Dove--Griggs--Kang--Sereni~\cite{griggs} and independently Das--Gan--Sudakov~\cite{dasgansuda} proved that Kleitman's conjecture is true for families whose size is at most the size of the $k$ middle layers. For a family $\F\subseteq \P(n)$, write $c_k(\F)$ for the number of $k$-chains contained in $\F$.
\begin{thm}[Das--Gan--Sudakov~\cite{dasgansuda}, Dove--Griggs--Kang--Sereni~\cite{griggs}]
Let $k,M,n>0$ with $M\leq \Sigma(n,k)$. Amongst families $\F\subseteq \P(n)$ of size $M$, the function $c_k(\F)$ is minimized by centered families.
\end{thm}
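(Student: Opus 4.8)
The plan is to induct on $n$, following Kleitman's ``split on the last coordinate'' idea and using Theorem~\ref{kleitman} (the case $k=2$) as a black box.

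First I would dispose of the trivial range. If $M\le \Sigma(n,k-1)$, a centered family of size $M$ lies in at most $k-1$ layers of $\P(n)$, and since the $k$ sets of a $k$-chain have $k$ distinct sizes, such a family contains no $k$-chain; thus $c_k(\C)=0$ for a centered $\C$ and there is nothing to prove. So assume $\Sigma(n,k-1)<M\le\Sigma(n,k)$. Here a centered family $\C$ occupies $k$ consecutive layers $\ell,\ell+1,\dots,\ell+k-1$ (with $\ell$ about $(n-k)/2$), is complete on the inner $k-1$ of them, and meets exactly one of the two outermost layers, which I call the \emph{partial layer}. Since $\C$ has exactly $k$ nonempty layers, every $k$-chain of $\C$ uses exactly one set from each, in particular one from the partial layer, and a set $B$ in the partial layer extends to a $k$-chain of $\C$ in exactly one way for each way of climbing through the $k-1$ complete layers above (resp.\ below) it. Hence each such $B$ is an endpoint of the same number $N_0=N_0(n,k)$ of $k$-chains (explicitly $N_0=(n-\ell)(n-\ell-1)\cdots(n-\ell-k+2)$ when the partial layer is the bottom one, and the mirror expression otherwise), so
\[
c_k(\C)=\bigl(M-\Sigma(n,k-1)\bigr)\,N_0 .
\]
This is the lower bound we must establish for an arbitrary $\F\subseteq\P(n)$ with $|\F|=M$.

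For the inductive step, split $\F$ as $\F\leftrightarrow(\A,\B)$ with $\A=\{A\in\F: n\notin A\}$ and $\B=\{A\setminus\{n\}: A\in\F,\ n\in A\}$, subfamilies of $\P(n-1)$ with $|\A|+|\B|=M$. A $k$-chain of $\F$ either uses no set containing $n$ (a $k$-chain of $\A$), or only sets containing $n$ (a $k$-chain of $\B$), or splits, for some $1\le j\le k-1$, into a $j$-chain $P$ of $\A$ and a $(k-j)$-chain $Q$ of $\B$ with $\max P\subseteq\min Q$. Writing $D_j(\A,\B)$ for the number of the latter pairs, this yields the identity
\[
c_k(\F)=c_k(\A)+c_k(\B)+\sum_{j=1}^{k-1}D_j(\A,\B).
\]
The strategy is then: (i) apply the inductive hypothesis to bound $c_k(\A)$ and $c_k(\B)$ from below by the optimal values in $\P(n-1)$; (ii) bound the cross-terms $D_j(\A,\B)$ from below, using that each $D_j$ is smallest when $\A$ is ``pushed up'' and $\B$ ``pushed down'' — so a Kruskal--Katona/compression argument should let us replace $\A,\B$ by initial segments of colex in each layer without increasing $c_k(\F)$; and (iii) optimize over the split $|\A|=a$, $|\B|=M-a$, where the resulting one-variable bound should be convex enough that its minimum falls at the split realized by the centered family of $\P(n)$ (for which $\A,\B$ are themselves of ``centered'' profile, with $\A$ sitting one layer above $\B$).

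The main obstacle is that the three pieces of the identity compete: $c_k(\A)$ and $c_k(\B)$ are minimized for centered $\A,\B$, whereas the cross-terms want $\A$ high and $\B$ low. Consequently the induction cannot be run on the bare assertion ``centered families are optimal''; one must instead carry the stronger, compression-flavoured statement that among all families of a prescribed layer-profile an initial-segment family minimizes $c_k$ and the relevant cross-quantities, and that among admissible profiles the minimizer is a ``centered, possibly half-layer-shifted'' profile — and then check this strengthening is both true and strong enough to close the induction. This is exactly where the hypothesis $M\le\Sigma(n,k)$ is essential: it forces the optimal profile onto only $k$ consecutive (hence innermost) layers, keeping the cross-term bookkeeping finite and the convexity computation in (iii) clean; without it the optimal profile can spread and the argument breaks. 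A variant worth pursuing in parallel is a second induction on $k$: peel the outermost layer off each $k$-chain and relate $c_k(\F)$ to the number of $(k-1)$-chains lying below (resp.\ above) that layer by a shadow-type inequality, invoking the $(k-1)$-case; there the obstacle migrates to showing that this shadow inequality is tight precisely for compressed centered families.
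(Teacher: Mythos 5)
The paper does not prove this theorem; it cites it, and both cited proofs (Das--Gan--Sudakov; Dove--Griggs--Kang--Sereni) as well as the analogous machinery the paper itself develops in Section~3 (Lemma~\ref{supersat} and its consequences) run through symmetric chain decompositions: one observes that for a fixed SCD $\X=\{X_1,\dots,X_N\}$ the quantity $\sum_i\binom{|X_i\cap\F|}{k}$ is a lower bound for $c_k(\F)$, that the multiset of intersection sizes $\{|X_i\cap\G_Q|\}$ is SCD-independent, and that convexity of $\binom{m}{k}$ forces the minimum onto the evenly-spread, i.e.\ centered, configuration; averaging over random SCDs and a further weight comparison then close the argument. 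Your proposal --- induction on $n$ by splitting on the last coordinate, as in Kleitman's $k=2$ proof, together with compression to control the cross-terms --- is a genuinely different route and never mentions SCDs.

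However, as written it is a plan rather than a proof, and the gap is exactly where you say it is. The preliminaries are fine (the trivial range $M\le\Sigma(n,k-1)$, the explicit value $c_k(\C)=(M-\Sigma(n,k-1))N_0$, and the identity $c_k(\F)=c_k(\A)+c_k(\B)+\sum_{j=1}^{k-1}D_j(\A,\B)$), but the three terms compete: the inductive hypothesis gives useful lower bounds on $c_k(\A),c_k(\B)$ only when $\A,\B$ are centered in $\P(n-1)$, while the $D_j$ shrink when $\A$ is pushed high and $\B$ low. You acknowledge that the bare statement cannot be fed back into the induction and propose carrying a ``stronger, compression-flavoured statement,'' but that statement is never formulated, step (ii) (that a single colex compression simultaneously does not increase $c_k(\A)$, $c_k(\B)$, \emph{and} every $D_j$) is asserted rather than argued, and step (iii) (convexity in the split $a=|\A|$) is a hope, not a computation. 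This is precisely the point at which Kleitman's $k=2$ argument does not obviously generalize, so there is a genuine missing step; the SCD route avoids the difficulty entirely by never splitting off a coordinate.
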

The first set of authors obtained stability versions of the above theorem as well. The current best result on Kleitman's conjecture is due to Das--Gan-Sudakov~\cite{dasgansuda}, who showed that Kleitman's conjecture holds for family sizes at most the middle $k+1$ layers, provided $k\leq n-6$.
\begin{thm}[\label{dasgansudakovthm}Das--Gan--Sudakov]
Let $n\geq 15$, $M\leq \Sigma(n,k+1)$ and $k\leq n-6$. Amongst families $\F\subseteq \P(n)$ of size $M$, the function $c_k(\F)$ is minimized by centered families.
\end{thm}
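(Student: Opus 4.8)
The plan is to argue by induction on $n$. After a preliminary compression reduction --- each layer $\F\cap\binom{[n]}{i}$ an initial segment of a suitable layer-compatible linear order, with the nonempty layers forming a block of consecutive sizes, a reduction one checks does not increase $c_k(\F)$ since the number of chains through a compressed layer is smallest, as for shadows --- split $\F=\F_0\cup\F_1$ according to whether the ground-set element $n$ is present, where $\F_0=\{F\in\F:n\notin F\}$ and $\F_1=\{F\setminus\{n\}:F\in\F,\ n\in F\}$, so that $\F_0,\F_1\subseteq\P([n-1])$ and $|\F_0|+|\F_1|=M$. Classifying each $k$-chain of $\F$ by the unique index at which it passes from sets avoiding $n$ to sets containing $n$ yields the identity
$$ c_k(\F)=c_k(\F_0)+c_k(\F_1)+\sum_{j=1}^{k-1}X_j(\F_0,\F_1), $$
where $X_j(\F_0,\F_1)$ counts pairs formed by a $j$-chain in $\F_0$ and a $(k-j)$-chain in $\F_1$ whose top and bottom set respectively satisfy a weak containment.

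The first two terms should be handled by the induction hypothesis, but with a twist: a centered family of $\P(n)$ restricts not to centered families of $\P([n-1])$ but to \emph{interval families} sitting half a step off the middle --- nonempty layers forming a consecutive block with at most the two outermost layers not full, but the block itself not centered. So the induction must be carried with a statement general enough to cover all such off-center interval families, at least those produced by the recursion. Granting that, what remains is a lower bound for the cross terms $X_j$ together with an optimisation over the split $M=|\F_0|+|\F_1|$ and over the two layer profiles.

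The crucial lemma should assert that for compressed interval families $\mathcal G,\mathcal H\subseteq\P(m)$ of prescribed sizes and profiles, $X_j(\mathcal G,\mathcal H)$ is minimised when $\mathcal G$ and $\mathcal H$ are nested in the natural way, and that the resulting value is in turn minimised, over all profiles, by centered ones. Normalized matching (LYM) is the engine: in compressed form it says that the upper shadow of an initial segment of layer $i$ is an initial segment of layer $i+1$ of at least the proportional size, so iterating the inequality across consecutive layers expresses the number of weak containments from $\mathcal G$ into $\mathcal H$ as a sum of contributions each controlled one layer at a time. The convexity of $x\mapsto\binom{x}{k}$ and the log-concavity of the binomial coefficients then force both the centered split of $M$ and the centered distribution of each half among the layers, and assembling the three inputs gives $c_k(\F)\ge c_k(\mathcal D)$ for a centered $\mathcal D$ of size $M$.

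I expect the real difficulty to lie, first, in the coupling between compression and the cross terms: one must exhibit a single compression sequence that decreases $c_k(\F_0)+c_k(\F_1)+\sum_j X_j$ as a whole, since a move that helps one summand may hurt another. Second, and more seriously, the multivariate optimisation has to be verified for \emph{every} $M\le\Sigma(n,k+1)$ and for the off-center interval families the induction produces, and it becomes delicate precisely in the boundary regime where the outermost of the $k+1$ layers is nearly full and the binomial-coefficient inequalities being used are close to equalities; this is exactly the regime that forces the hypotheses $n\ge 15$ and $k\le n-6$, the latter ensuring that the $k+1$ layers in question genuinely behave like middle layers. The base case of bounded $n$ is then a finite, if laborious, verification.
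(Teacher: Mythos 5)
First, note that Theorem~\ref{dasgansudakovthm} is not proved in this paper: it is imported verbatim from Das--Gan--Sudakov~\cite{dasgansuda} and used as a black box (indeed, Theorem~\ref{dasgansudaweak} in Section~\ref{smalllargesetssection} is lifted from the same source). So there is no ``paper's own proof'' to compare against. What can be said is that the route Das--Gan--Sudakov actually take is quite different from yours: they work with symmetric chain decompositions, averaging and convexity (the same circle of ideas that Lemma~\ref{supersat} and Propositions~\ref{mainmiddle}--\ref{mainmiddlestrong2} develop here), together with a degree analysis in the $k$-chain hypergraph $\HH_{j,k}$ to control the cost of sets placed in outer layers (compare Theorem~\ref{dasgansudaweak}). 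They do not induct on $n$ via the bipartition $\F=\F_0\cup\F_1$.

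As for your proposal, it is a plausible programme but not a proof, and the gaps are not cosmetic. (1)~The ``preliminary compression reduction'' is asserted, not established. Making each $\F\cap\binom{[n]}{i}$ an initial segment of some order, \emph{and} forcing the occupied layers to be a consecutive block, are two separate operations, and neither obviously decreases $c_k(\F)$. The analogy with shadows points the wrong way: initial segments minimise shadows, which \emph{maximises} containments between consecutive compressed layers, and the interaction across several partially filled layers is exactly the delicate part. For $k=2$ this is already the content of Kleitman's theorem; for general $k$ one needs an argument, not ``one checks.'' (2)~You correctly observe that $\F_0,\F_1$ inherited from a centered $\F$ are off-centre interval families, so the theorem statement cannot serve as its own induction hypothesis; but you never formulate the strengthened statement the induction actually needs, and it is unclear what that statement should be in a way that closes. (3)~The ``crucial lemma'' about $X_j(\mathcal G,\mathcal H)$ being minimised by nested, centered profiles is left entirely as a wish, and you yourself flag that a single compression must simultaneously control $c_k(\F_0)$, $c_k(\F_1)$ and all $k-1$ cross terms. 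That coupling, together with the multivariate optimisation over the split $|\F_0|+|\F_1|=M$ and the two profiles, is where the proof would actually live, and none of it is done. Until those three points are resolved, this is a research plan rather than a proof of the theorem.
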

Once again they actually obtained slightly stronger results, providing stability results for families for which $c_k(\F)$ is close to the minimum. For more on the history and motivation of this problem we refer the reader to the very well-written introduction of \cite{dasgansuda}. 

Our main result can be viewed as an asymptotic solution to Kleitman's conjecture.
\begin{thm}\label{mainthm}
For every $k$ and $\eps>0$ there exists an $n_0=n_0(k,\eps)$ such that if $n\geq n_0$ and $M\leq (1-\eps)2^n$ then amongst families $\F\subset \P(n)$ of size $M$,  the function $c_k(\F)$ is minimized by centered families.
\end{thm}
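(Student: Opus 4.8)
The plan is to imitate the strategy behind Kleitman's $k=2$ theorem, namely \emph{compression towards the middle layers}, and then to analyse the resulting (highly restricted) families. For $x\in[n]$, partition $\P(n)$ into the pairs $\{F,F\cup\{x\}\}$ with $x\notin F$, and let $C_x(\F)$ be obtained from $\F$ by replacing, in each such pair that meets $\F$ in exactly one set, that set by whichever of $F$ and $F\cup\{x\}$ is closer to level $\lceil n/2\rceil$ (breaking ties towards the larger set, in accordance with the definition of a centered family). Thus $C_x$ is a down-compression on the levels above $\lceil n/2\rceil$ and an up-compression below it; in particular it preserves $|\F|$, and a family fixed by every $C_x$ is precisely one that is an up-set on the levels $\le\lceil n/2\rceil$ and a down-set on the levels $\ge\lceil n/2\rceil$. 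Call such a family \emph{compressed}.

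The first task is to prove the monotonicity $c_k(C_x(\F))\le c_k(\F)$ (or, failing a pointwise statement, that iterating the $C_x$ over all $x$ does not increase $c_k$). As in Kleitman's argument one fixes $x$ and sorts the $k$-chains of $\P(n)$ according to how they interact with the pairs $\{F,F\cup\{x\}\}$, comparing class by class the chains of $\F$ with those of $C_x(\F)$; the point is that moving a set towards the middle should help because a set far from the middle layer is comparable to far more sets than one near it (a set of size $i$ is comparable to roughly $2^{\max(i,\,n-i)}$ others). Iterating reduces the theorem to compressed families, for which there is a decisive simplification: in a chain $F_1\subsetneq\cdots\subsetneq F_k$ every $F_i$ either has size $\le\lceil n/2\rceil$, in which case it contains $F_1$, or size $\ge\lceil n/2\rceil$, in which case it is contained in $F_k$; hence if $\F$ is compressed the whole chain lies in $\F$ as soon as $F_1,F_k\in\F$. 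Consequently, writing $T(d)$ for the number of $k$-element chains from $\emptyset$ to $[d]$ (so $T(d)=0$ when $d<k-1$), for every compressed $\F$
\[
  c_k(\F)=\sum_{\substack{A,B\in\F\\ A\subsetneq B}} T\bigl(|B|-|A|\bigr)=\sum_{d\ge k-1}T(d)\,P_d(\F),
\]
where $P_d(\F)$ denotes the number of pairs $A\subsetneq B$ in $\F$ with $|B|-|A|=d$.

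It then remains to show that among compressed families of size $M\le(1-\eps)2^n$ this weighted count of comparable pairs is minimised by the centered family $\mathcal{K}$ of size $M$. By a Chernoff bound $\mathcal{K}$ is supported on the $O_\eps(\sqrt n)$ levels closest to $n/2$, a range in which the coefficients $\binom{n}{a}$ and $\binom{n}{a}\binom{n-a}{d}$ vary slowly; one checks first that a compressed family with the same level profile as $\mathcal{K}$ has exactly $c_k(\mathcal{K})$ chains (each relevant $P_d$ is then determined by the profile), so it suffices to show that shifting to any other compressed, i.e.\ density-unimodal, profile does not help. For this I would run an exchange argument on the profile, transferring mass from the more extreme partially filled level towards the centre and verifying that $\sum_{d}T(d)P_d$ does not increase, until one reaches the ``bang-bang'' profile of $\mathcal{K}$; the hypotheses that $n$ is large and that $2^n-M\ge\eps 2^n$ are used to keep $\mathcal{K}$ inside the bulk and to make the requisite inequalities between binomial coefficients go through.

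I expect the dominant obstacle to be the monotonicity $c_k(C_x(\F))\le c_k(\F)$ for $k\ge3$. Already for $k=2$ this is the technical core of Kleitman's proof; for longer chains the combinatorics of how a $k$-chain splits across the pairs $\{F,F\cup\{x\}\}$ is considerably heavier, since such a pair can be met by a chain in its bottom set, its top set, or an interior set, and a single application of $C_x$ can simultaneously create and destroy many chains, so a clean class-by-class comparison (or a suitable potential/averaging over $x$) is needed. A secondary, milder difficulty is the final convexity step: the functional being minimised is convex in the level profile, so its minimum over the profile polytope need not a priori be attained at a vertex, and one must exploit the precise, middle-peaked decay of the weights $T(|B|-|A|)$ together with the binomial coefficients to force the central profile to win — and it is exactly there that the hypotheses $n\ge n_0(k,\eps)$ and $M\le(1-\eps)2^n$ appear to be essential.
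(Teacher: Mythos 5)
Your approach is genuinely different from the paper's, and it contains a real gap that the paper's method is specifically designed to circumvent. You propose element-wise compressions $C_x$ on the family $\F$ itself and acknowledge that the required monotonicity $c_k(C_x(\F))\le c_k(\F)$ (or some averaged substitute) is ``the dominant obstacle'' --- but you neither prove it nor give evidence that it holds for $k\ge 3$. Moving a set one level towards the middle decreases the number of sets comparable to it on one side but \emph{increases} it on the other, so a single application of $C_x$ can simultaneously destroy and create many $k$-chains; no pairing of created/destroyed chains, no potential function, and no averaging over $x$ is supplied. This is not a technicality to be deferred: it is the lemma on which your entire reduction to compressed families rests, and without it the argument does not get off the ground.

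The paper never compresses the family of sets at all. Instead it assigns to each chain $\A$ the weight $\w(\A)$, defined as the probability that $\A$ lies inside a single chain of a uniformly random symmetric chain decomposition of $\P(n)$ (Proposition~\ref{weightformula}), proves a weighted supersaturation lemma (Lemma~\ref{supersat}) by a convexity argument over SCDs, and then performs a compression on the \emph{space of $k$-chains present} --- encoded as a ``measured subhypergraph'' $f:E(\HH)\to[0,1]$ --- rather than on sets. This linearises the problem: the supersaturation inequalities become linear constraints, $|f|$ is the objective, and the centered family's indicator is shown to be optimal among all feasible $f$ (Propositions~\ref{compressed} and~\ref{mainmiddlestrong}), using the quantitative weight comparisons in Propositions~\ref{weightsdecreasealot}--\ref{weightsdecreaselittle2}. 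Sets far from the middle $\sim\sqrt{n\log n}$ levels are excluded separately in Section~\ref{smalllargesetssection} by Das--Gan--Sudakov-style degree estimates. Your structural observation that a compressed family satisfies $c_k(\F)=\sum_{A\subsetneq B\in\F}T(|B\setminus A|)$ is correct and pleasant, and your final profile-exchange step is at least morally close to what the paper does with weights; but the crucial compression lemma is missing, and the paper's contribution is precisely to replace that unavailable lemma with the SCD-weight supersaturation machinery.
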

Our proof consists of two main parts. First we show that amongst families that are contained in the middle roughly $\sqrt{n\log n}$ layers, centered families are the best (i.e.~they have the smallest $c_k(\F)$). This part is based on the symmetric chain decomposition construction of de Bruijn--Tengbergen--Kruyswijk~\cite{scd} and ideas of Kleitman~\cite{kleitmanpermu} and contains most of the new ideas of the paper. The second part of the proof is then showing that an optimal family cannot contain sets that are too small or too large. Our method of proving this is mostly based on ideas of Das--Gan--Sudakov~\cite{dasgansuda}. Throughout the paper we make no effort to optimize the value of $n_0(k,\eps)$. For the corresponding maximization question, i.e.~determining the maximum possible number of comparable pairs amongst families of size $M$ in $\P(n)$ we refer the reader to~\cite{alonmaxim}.

\section{Set-up}\label{setupsection}

Our main goal of this paper is to prove Theorem~\ref{mainthm}. Hence throughout the paper we consider $k$ and $\eps>0$ to be fixed. We set $n_0$ to be sufficiently large so that all following inequalities hold and want to show that for any $n\geq n_0$ the conclusion of Theorem~\ref{mainthm} holds. For that we fix an arbitrary $M\leq (1-\eps)2^n$.  By Theorem~\ref{dasgansudakovthm} we know that the conclusion of Theorem~\ref{mainthm} holds if $M\leq \Sigma(n,k+1)$ hence we will always assume 
\begin{equation}\label{rkplus1}
M>\Sigma(n,k+1),
\end{equation} recalling that $\Sigma(n,s)$ is the total size of the $s$ biggest layers in $\P(n)$. Let $r$ be defined as the unique integer such that $$\Sigma(n,r-1)<M\leq \Sigma(n,r).$$
That is, we wish to show that one of the optimal families will fully contain the $r-1$ middle layers, and some elements from a neighboring layer. We observe that since for $n$ large enough $$\Sigma(n,\sqrt{n}\log^{1/20}n)>\left(1-\frac{\eps}{2}\right)2^n,$$ we have 
\begin{equation}\label{rupperbound}
r\leq \sqrt{n}\log^{1/10}n.
\end{equation}
Moreover we will assume that 
\begin{equation}\label{kupperbound}
k\leq \log^{1/100}n.
\end{equation}

Throughout the paper most propositions will aim to show that given certain conditions, centered families minimize the number of $k$-chains. Note that every centered family $\F\subset \P(n)$ of size $M$ contains the same number of $k$-chains. It will be convenient for us to pick for each positive integer $Q$ one specific centered family of size $Q$, that we will call $\G_Q$, and show that $\G_M$ minimizes the number of $k$-chains. 
Note that if $\F$ is centered then there exists at most one $j\in [n]$ such that $\emptyset\neq \F\cap\binom{[n]}{j}\neq \binom{[n]}{j}$, and we call this $j$ the \emph{partial layer} of $\F$ if it exists. Moreover if $Q>0$ is a fixed integer then every centered family $\F$ of size $Q$ in $\P(n)$ has the same partial layer $j$ and the same intersection sizes with all layers. Given $Q,n$ the only free choice one has when specifying a centered family of size $Q$ in $\P(n)$ is what to do on the partial layer. A natural choice for $\G_Q$ is to choose an initial segment of the partial layer according to some total order on the elements of $\P(n)$. What ordering we pick makes absolutely no difference in the proof - but we believe it could be helpful for the reader to pick a specific total order. The \emph{lexicographic order} $<_{lex}$ on $\P(n)$ is defined as follows. If $|A|<|B|$ then $A<_{lex} B$. Otherwise if $|A|=|B|$ then if the smallest element of $A\Delta B$ is in $A$ then $A<_{lex} B$, otherwise $B\leq_{lex} A$. 
For any positive integer $Q$ let $\G_Q$ be the centered family of size $Q$ in $\P(n)$ whose intersection with its partial layer $j$ is an initial segment of the lexicographic ordering of $\binom{[n]}{j}$.

We will need to deal with families which are contained in a subset of $\P(n)$, for these it will be useful to extend the above definitions in a natural way. Given a family $\P'\subseteq \P(n)$, say that a family $\F$ \emph{is centered in} $\P'$ if for any two sets $A,B\in \P'$ with $A\in \F$ and $B\notin \F$ we have that $|n/2-|A||\leq|n/2-|B||$, and if $|n/2-|A||=|n/2-|B||$ then we have $|A|\geq |B|$. That is, if $\F$ is constructed by ``taking sets whose size is as close to $n/2$ as possible in $\P'$'' (and if two layers have the same size we fill up the top one first). For a positive integer $Q$ define $\G_{\P',Q}$ to be the family of size $Q$ which is centered in $\P'$ and whose intersection with its partial layer is an initial segment of the restriction of $<_{lex}$ to $\P'$. So the family $\G_Q$ defined above equals $\G_{\P(n),Q}$.

A family $\A=\{A_1,\ldots,A_{\ell}\}\subset \P(n)$ is a \emph{chain} if $A_1\subsetneq \ldots \subsetneq A_{\ell}$. We say $\A$ is a chain with \emph{step sizes} $a_1,\ldots,a_{\ell -1}$ if $|A_{i+1}\setminus A_i|=a_i$ for all $i\in [\ell-1]$.  For a family $\F\subseteq \P(n)$  and integers $a_1,\ldots,a_{k-1}\geq 1$, define 
$$\Phi^*(\F,a_1,\ldots,a_{k-1}):=\{(A_1,\ldots,A_k)\in\F^k: A_1\subsetneq \ldots \subsetneq A_k, \text{ and } |A_{i+1}\setminus A_i|= a_i \text{ for all } i\in [k-1]\},$$ the set of $k$-chains with precisely these step sizes in $\F$. Given a $k$-chain $\A=\{A_1,\ldots,A_k\}$ with $A_1\subsetneq\ldots\subsetneq A_k$, define
$$d(\A):=\max\{||A_k|-n/2|, ||A_1|-n/2|\}.$$

For every fixed $\mathbf{a}=(a_1,\ldots,a_{k-1})$ we fix a total order $<^*_{\F,\mathbf{a}}$ on $\Phi^*(\F,\mathbf{a})$ that satisfies the following property:

\vspace{.2in}

\emph{For every positive integer $Q$ the family $\Phi^*(\G_{\F,Q},\mathbf{a})$ is an initial segment of the order $<^*_{\F,\mathbf{a}}$.}

\vspace{.2in}

Note that such an ordering $<^*_{\F,\mathbf{a}}$ exists because $\G_{\F,Q}\subsetneq \G_{\F,Q+1}$ for all $0\leq Q\leq |\F|-1$.

\textbf{Notation.} Wherever possible we use standard notation and for the variable names we aim to follow the notation of~\cite{dasgansuda}. There are two notational oddities that we feel we should mention. Firstly, for chains we use cursive capital letters, e.g. $\A,\B$, etc. - however, later in the paper we will deal with hypergraphs on vertex set $\P(n)$ with edges corresponding to some chains, whence we will refer to the edges as $e,f$, etc. Several times we will, without mentioning this explicitly, make use of the natural correspondence between such edges and chains and hence occasionally label chains as $e,f$, etc. wherever this does not create confusion. Secondly, since we often consider the step sizes  $a_1,\ldots,a_{k-1}$ of a chain, for sake of brevity and cleanliness we will sometimes abbreviate this list as $\mathbf{a}$, with the understanding that $\mathbf{a}=(a_1,\ldots,a_{k-1})$. We will always assume without mentioning it explicitly that the variable $\mathbf{a}$ refers to a list $(a_1,\ldots,a_{k-1})$ of positive integers corresponding to the step sizes of some chain. Moreover, whenever the variable $\mathbf{a}$ and the list $a_1,\ldots,a_{k-1}$ or $\{a_i\}_{i=1}^{k-1}$ are used in the same context they will refer to the same thing.

\section{Families close to being centered}

Set $$d=\lfloor 10k\sqrt{n\log n}\rfloor$$ and write $\P_{n,d}$ for the union of the $d$ middle layers in $\P(n)$, that is, for the family of sets $A\in \P(n)$ with $\lceil\frac{n-d+1}{2}\rceil\leq |A|\leq \lceil\frac{n+d-1}{2}\rceil$. Recall that we fixed an $M$ at the very beginning of Section~\ref{setupsection}, which denotes the size of the families we will ultimately be interested in. Our goal in this section is to show the following proposition:
\begin{prop}\label{mainmiddle}
Amongst all families $\F\subseteq \P_{n,d}$ of size $M$, the number of $k$-chains in $\F$ is minimized if $\F=\G_M$. 
\end{prop}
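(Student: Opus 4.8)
\textbf{Proof proposal for Proposition~\ref{mainmiddle}.}

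The plan is to use a symmetric chain decomposition (SCD) of $\P_{n,d}$, combined with a compression/shifting argument on the space of chains, in the spirit of Kleitman's permutation method. First I would fix a symmetric chain decomposition of $\P(n)$ (de Bruijn--Tengbergen--Kruyswijk) and restrict it to $\P_{n,d}$; since $d$ is much larger than $\sqrt{n\log n}$ while $r\leq \sqrt{n}\log^{1/10}n$, essentially every long symmetric chain survives, and the family $\G_M$ intersects each symmetric chain in a ``centered'' sub-interval. The idea is to decompose the count $c_k(\F)$ according to which symmetric chains the $k$ sets of a $k$-chain lie in, and for each fixed pattern of symmetric chains (and each fixed tuple of step sizes $\mathbf{a}$) to show the contribution is minimized by $\G_M$. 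Concretely, I would partition the $k$-chains of $\P_{n,d}$ into ``bundles'' indexed by an ordered selection of symmetric chains together with the position data, so that within each bundle the number of realized $k$-chains depends only on how many elements $\F$ picks from each relevant layer along each symmetric chain.

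The heart of the argument is a local move: given an arbitrary $\F\subseteq\P_{n,d}$ of size $M$, I want to transform it step by step toward $\G_M$ without ever increasing $c_k(\F)$. A single move takes a set $A\in\F$ with $||A|-n/2|$ as large as possible and a set $B\notin\F$ closer to the middle, and replaces $A$ by $B$. To control the change in $c_k$ I would use the key monotonicity phenomenon already implicit in the $d(\A)$ notation and in the definition of $\Phi^*(\F,\mathbf{a})$: for fixed step sizes the number of $k$-chains through a given layer is a ``unimodal'' quantity, and pushing a set toward the middle layer can only decrease the number of new $k$-chains it creates relative to the number it destroys, provided the rest of $\F$ is already ``at least as centered'' in the relevant layers. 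The cleanest way to organize this is to first prove a layer-by-layer statement: among all $\F$ with prescribed intersection sizes $m_i=|\F\cap\binom{[n]}{i}|$ with each layer, $c_k(\F)$ is minimized when each $\F\cap\binom{[n]}{i}$ is an initial segment of $<_{lex}$ (this is where the SCD / Kleitman permutation averaging enters, symmetrizing over the layers so that the within-layer choice is governed by a convexity inequality). Then, in a second step, I would show that among all admissible profiles $(m_i)$ summing to $M$ and supported on $\P_{n,d}$, the centered profile minimizes $c_k$; this is a discrete optimization over profiles, handled by exchanging mass between two layers and checking a single-variable convexity/monotonicity inequality for the resulting change in the number of $k$-chains with each fixed $\mathbf{a}$, then summing over $\mathbf{a}$.

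For the first (within-layer) step, more precisely: fix the profile $(m_i)$ and a tuple $\mathbf{a}$. For a uniformly random maximal chain (permutation) in $\P(n)$, each $k$-chain with step sizes $\mathbf{a}$ lies on a predictable number of maximal chains, so $|\Phi^*(\F,\mathbf{a})|$ can be written as an average over permutations of a count that, restricted to one permutation, only sees which of the $\leq k$ relevant layers' representatives are in $\F$. The averaging reduces matters to showing that the ``local'' contribution is Schur-convex in the right sense, so that initial segments of $<_{lex}$ in each layer minimize it; the ordering $<^*_{\F,\mathbf{a}}$ fixed in the set-up is exactly the bookkeeping device that lets one say ``$\G_{\F,Q}$ realizes an initial segment of the chains.'' I expect the main obstacle to be precisely this within-layer minimization when the $k$ sets of a chain lie in $k$ distinct layers that are far apart (spanning nearly the whole width $d$): the permutation averaging must be set up so that the different layers decouple, and one must handle the boundary layers of $\P_{n,d}$ (where symmetric chains get truncated) — this truncation is harmless because $d\gg r,k\sqrt{n\log n}$, but making that rigorous, together with verifying the convexity inequality uniformly over all $\mathbf{a}$ with $\sum a_i$ possibly as large as $d$, is the delicate part. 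The second step (optimizing the profile) should then follow from Theorem~\ref{dasgansudakovthm}-style exchange arguments restricted to $\P_{n,d}$, together with the observation that moving mass outward strictly increases $d(\A)$ for the chains it touches and hence increases the count.
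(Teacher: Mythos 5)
There is a genuine gap in your proposal, and it sits exactly where the paper's main new idea lives. Your plan is organized around a \emph{local set-replacement move} (``take a far set $A$ out, put a closer set $B$ in'') together with a two-step decomposition: first optimize within each layer for a fixed profile $(m_i)$, then optimize the profile by a pairwise exchange argument. Both halves of this plan are problematic. The set-replacement move can increase $c_k(\F)$: a set $B$ close to the middle typically lies in many more short $k$-chains than the far set $A$ lies in long ones, and there is no reason the destroyed chains outnumber the created ones unless one already controls the structure of $\F$ in a circular way. The profile-exchange step is precisely the technique that Das--Gan--Sudakov use and which stops working once $M$ exceeds roughly $\Sigma(n,k+1)$; the whole point of the present theorem is to get past that barrier, so appealing to a ``Theorem~\ref{dasgansudakovthm}-style exchange argument'' cannot close the gap. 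The within-layer claim (lex initial segments minimize for a fixed profile) is also not true in general -- two adjacent thin layers with one set each contain a $2$-chain if chosen as lex initial segments but need not otherwise -- and in any case the paper does not prove or use such a statement.

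What the paper does instead, and what your proposal is missing, is a deliberate \emph{relaxation}. The SCD averaging you invoke gives a weighted supersaturation bound (the paper's Lemma~\ref{supersat}): for every step-size vector $\mathbf{a}$, the total \emph{weight} of $k$-chains with step sizes at least $\mathbf{a}$ in $\F$ is at least that in $\G_M$. That is a family of linear constraints indexed by $\mathbf{a}$. The problem is then to minimize the \emph{unweighted} count of edges subject to these weighted constraints, and the crucial observation is that after one compression step the object you are holding is no longer the indicator function of a family. So the paper moves to \emph{measured subhypergraphs} $f\colon E(\HH)\to[0,1]$, compresses at the level of chains rather than sets (Proposition~\ref{compressed}, using Lemma~\ref{weightsdecreaselittle} to order the chains within each $\Phi^*(\cdot,\mathbf{a})$ by weight), and then shows (Proposition~\ref{mainmiddlestrong}) that any $M$-good measured subhypergraph strictly bigger than $f_{\G_M}$ can be improved by trading a little mass on some edge of step sizes $\mathbf{a}$ for much less mass on edges with smaller step sizes, exploiting the quantitative weight drop in Propositions~\ref{weightsdecreasealot} and~\ref{weightsdecreaselittle2}. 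Your sketch jumps from ``SCD averaging plus Schur-convexity'' to the conclusion, but the weighted-versus-unweighted gap is exactly where the work is, and without the measured-subhypergraph relaxation and the chain-level (not set-level) compression, the argument does not go through.
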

Once again we do not claim that $\G_M$ is the only family minimizing the number of $k$-chains. Once we have shown this proposition the only remaining step is to show that an optimal family cannot contain sets that are very far from the middle layer. This will be done later, in Section~\ref{smalllargesetssection}.

The proof of Proposition~\ref{mainmiddle} uses the standard technique of compressions. Given a suboptimal family we show that we can apply some operations to it to make it better (in a sense defined later). One of the main ideas of the proof is that instead of moving the sets in the family (as in standard compression techniques), we view the family as a collection of chains and apply compression to the chains instead of directly to the family. One interesting aspect of this compression is that if we apply it to a family $\F$ we get an object that does not usually correspond to a family $\F'\subset \P(n)$ - instead the object we obtain after compressing a family will be a subgraph (equipped with a measure) of a weighted hypergraph, whose edges correspond to chains in $\P(n)$. In this hypergraph the induced subhypergraphs correspond to our usual families, but in order to make our compression methods work we have to leave the world of standard families and enter the realm of these more general objects (which we will refer to as \emph{measured subhypergraphs}). Hence in order to prove Proposition~$3.1$ we will in fact show that amongst all such objects that have the same 'size' as our family $\F$, the ones corresponding to centered families cannot be improved by compressions and then deduce Proposition~\ref{mainmiddle} from this.

\subsection{Definitions}
We say $\A=\{A_1,\ldots,A_{\ell}\}$ is a chain with \emph{step sizes} $a_1,\ldots,a_{\ell -1}$ if $|A_{i+1}\setminus A_i|=a_i$ for every $i\in [\ell-1]$. It has step sizes \emph{at least} $a_1,\ldots,a_{\ell-1}$ if $|A_{i+1}\setminus A_i|\geq a_i$ for all $i\in [\ell-1]$. The \emph{height} of the chain $\A$ is defined as $h(\A):=|A_\ell\setminus A_1|$. It is called a \emph{downward} chain if  $||A_{\ell}|-n/2|\geq ||A_1|-n/2|$, otherwise we call it an \emph{upward} chain.
 We call $\A$ a \emph{skipless chain} if it is a chain and $|A_{i+1}\setminus A_i|=1$ for all $i\in[\ell -1]$. Moreover $\A$ is a \emph{symmetric chain} if it is a skipless chain and $n/2-|A_1|=|A_{\ell}|-n/2$. That is, a symmetric chain starts at some level $s$, ends at level $n-s$ and contains precisely one set from each level in between. A \emph{symmetric chain decomposition} (SCD in short) of $\P(n)$ is a partition of $\P(n)$ into disjoint symmetric chains, whose union is the entire $\P(n)$. It is not a priori obvious that an SCD of $\P(n)$ should exist for all $n$ - this was showed by de Bruijn--Tengbergen--Kruyswijk~\cite{scd}.  Note that as every symmetric chain intersects the middle layer in precisely one element, every SCD of $\P(n)$ consists of precisely $N:=\binom{n}{\lfloor n/2 \rfloor}$ chains.

Let $\A\subset \P(n)$ be a chain and $\X=\{X_1,\ldots,X_N\}$ be an SCD of $\P(n)$. We say $\X$ \emph{contains} $\A$ if there exists an $i\in [N]$ such that every set in $\A$ is contained in the chain $X_i$. For a chain $\A\subset \P(n)$ we define its \emph{weight} $\w(\A)$ to be the probability that $\A$ is contained in an $SCD$ that is chosen uniformly at random from the collection of all SCDs of $\P(n)$. This weight can be calculated easily, as shown by the following proposition.

\begin{prop}\label{weightformula}
Let $\ell$ be an arbitrary positive integer and let $\A=\{A_1,\ldots,A_{\ell}\}\subset \P(n)$ be a chain with $A_1\subsetneq \ldots \subsetneq A_{\ell}$. If $\A$ is a downward chain then 
$$\w(\A)=\prod_{i=1}^{\ell-1}\binom{|A_{i+1}|}{|A_i|}^{-1}=\frac{|A_1|!}{|A_{\ell}|!}\prod_{i=1}^{\ell -1}|A_{i+1}\setminus A_i|!.$$
If $\A$ is an upward chain then $$\w(\A)=\prod_{i=1}^{\ell-1}\binom{n-|A_{i}|}{n-|A_{i+1}|}^{-1} = \frac{(n-|A_{\ell}|)!}{(n-|A_1|)!}\prod_{i=1}^{\ell -1}|A_{i+1}\setminus A_i|!.$$
\end{prop}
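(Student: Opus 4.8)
The plan is to compute $\w(\A)$ by exploiting the fact that the uniform distribution on the collection of all SCDs of $\P(n)$ is invariant under the natural action of $S_n$, since relabelling the ground set carries SCDs bijectively to SCDs. Throughout write $\X$ for a uniformly random SCD, and for a set $F$ write $C_\X(F)$ for the unique chain of $\X$ containing $F$; then $\w(\A)=\Pr[A_1,\dots,A_\ell \text{ all lie on one chain of }\X]$.

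I would first record two elementary observations. Structural fact: every chain of an SCD that contains a set of size $t$ consists of exactly one set per level from some level $s$ to level $n-s$ with $s\le\min(t,n-t)$; in particular it contains a (unique) set of size $t'$ for every $t'$ with $\min(t,n-t)\le t'\le\max(t,n-t)$. Arithmetic fact: if $\A$ is downward with $\ell\ge 2$ then $|A_\ell|\ge n/2$ and $|A_1|+|A_\ell|\ge n$, so $n-|A_\ell|\le|A_1|\le|A_j|\le|A_\ell|$ for all $j\in[\ell]$. Combining the two, in \emph{every} SCD the chain through $A_\ell$ automatically contains a set at each of the levels $|A_1|,\dots,|A_\ell|$.

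The heart of the proof is a short induction ``from the top''. For $1\le i\le\ell$ let $\mathcal E_i$ be the event that $A_i,A_{i+1},\dots,A_\ell$ all lie on a common chain of $\X$; thus $\mathcal E_\ell$ is certain and $\w(\A)=\Pr[\mathcal E_1]$. Fix $i<\ell$ and condition on $\mathcal E_{i+1}$. On this event the common chain through $A_{i+1},\dots,A_\ell$ reaches down to level $|A_i|$ (by the two facts above), so there is a well-defined set $D$ of size $|A_i|$ on that chain, and $D\subseteq A_{i+1}$. Both the event $\mathcal E_{i+1}$ and the random set $D$ are equivariant for the subgroup $H_{i+1}=S_{A_{i+1}}\times S_{A_{i+2}\setminus A_{i+1}}\times\cdots\times S_{A_\ell\setminus A_{\ell-1}}\times S_{[n]\setminus A_\ell}\le S_n$, which acts transitively on $\binom{A_{i+1}}{|A_i|}$; hence the conditional law of $D$ given $\mathcal E_{i+1}$ is uniform on $\binom{A_{i+1}}{|A_i|}$, giving $\Pr[D=A_i\mid\mathcal E_{i+1}]=\binom{|A_{i+1}|}{|A_i|}^{-1}$. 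Since $\mathcal E_{i+1}\cap\{D=A_i\}=\mathcal E_i$, multiplying these conditional probabilities yields $\w(\A)=\prod_{i=1}^{\ell-1}\binom{|A_{i+1}|}{|A_i|}^{-1}$, and expanding the binomial coefficients and telescoping the factorials gives the stated closed form. The upward case follows immediately by applying the downward case to the reverse–complement chain $\{[n]\setminus A_\ell\subsetneq\cdots\subsetneq[n]\setminus A_1\}$: complementing every set is a measure-preserving involution on SCDs that interchanges upward and downward chains and leaves $\w$ unchanged, so re-indexing the resulting product produces $\prod_{i=1}^{\ell-1}\binom{n-|A_i|}{n-|A_{i+1}|}^{-1}$.

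The main point requiring care — and the one place the hypothesis ``downward'' is genuinely used — is the claim that, conditionally on $\mathcal E_{i+1}$, the common chain really descends all the way to level $|A_i|$; without it the identity $\Pr[\mathcal E_i\mid\mathcal E_{i+1}]=\binom{|A_{i+1}|}{|A_i|}^{-1}$ could fail, since some SCD-chains through $A_{i+1}$ stop short of level $|A_i|$, and the true value would then be strictly smaller. This is exactly what the inequality $|A_1|+|A_\ell|\ge n$ rules out. One should also phrase the ``invariance $\Rightarrow$ uniform'' step carefully: $D$ is an $H_{i+1}$-equivariant function of $\X$ valued in the $H_{i+1}$-set $\binom{A_{i+1}}{|A_i|}$ on which $H_{i+1}$ is transitive, and $\mathcal E_{i+1}$ is $H_{i+1}$-invariant, so pushing forward the (necessarily $H_{i+1}$-invariant) conditional measure lands on the uniform measure there.
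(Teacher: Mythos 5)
Your proposal is correct and follows essentially the same approach as the paper: condition from the top of the chain downward and obtain a telescoping product of inverse binomial coefficients, using the fact that a downward chain forces the ambient symmetric chain to reach every level between $|A_1|$ and $|A_\ell|$. You do spell out more carefully two points the paper leaves implicit — the $S_n$-equivariance argument that makes the conditional law of the level-$|A_i|$ set uniform on $\binom{A_{i+1}}{|A_i|}$, and the complementation involution for the upward case — but the underlying idea is identical.
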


\subsection{Properties of the weight function}
There are two reasons for why we chose this probability for the weight $\w(\A)$ of a set. The first one is that it will imply that, under suitable conditions, if $\A,\B$ are two chains with $h(\A)<h(\B)$ then we will have $\w(\A)\gg \w(\B)$. The second reason is that it will allow us to formulate a natural and best possible weighted supersaturation statement, essentially showing that centered families minimize the total weight of $k$-chains that they contain. The hard part will be to show that this implies that they also minimize the \emph{number} of $k$-chains.

We start by proving the formulae claimed in the previous subsection.
\begin{proof}[Proof of Proposition~\ref{weightformula}]
Let $\A=\{A_1,\ldots,A_{\ell}\}\subset \P(n)$ be a downward chain with $A_1\subsetneq \ldots \subsetneq A_{\ell}$, the proof of the upward case is identical. Let $\X$ be a SCD chosen uniformly at random from the collection of all SCDs of  $\P(n)$ and let $X$ be the chain in $\X$ that contains $A_{\ell}$.  Since $X$ is a symmetric chain and $\A$ is downward we have that for each $i\in [\ell]$, the chain $X$ contains precisely one element of size $|A_i|$ (and possibly some others). Let $B_i$ be the event that $A_i\in X$. Then
\begin{equation*}
\begin{split}
P(B_1\cap \ldots \cap B_{\ell})&=P(B_{\ell})P(B_1\cap\ldots\cap B_{\ell -1}|B_{\ell})\\
&= P(B_{\ell})P(B_{\ell -1}|B_{\ell})P(B_1\cap\ldots\cap B_{\ell -2}|B_{\ell -1}\cap B_{\ell})\\
&=P(B_{\ell})P(B_{\ell -1}|B_{\ell})P(B_1\cap\ldots\cap B_{\ell -2}|B_{\ell -1})\\
&=\ldots\\
&=P(B_{\ell})\cdot P(B_{\ell -1}|B_{\ell})\cdot P(B_{\ell -2}|B_{\ell -1})\cdot \ldots \cdot P(B_1 | B_2)\\
&=1\cdot\binom{|A_{\ell}|}{|A_{\ell -1}|}^{-1}\cdot \binom{|A_{\ell -1}|}{|A_{\ell -2}|}^{-1}\cdot \ldots\cdot\binom{|A_2|}{|A_1|}^{-1}.
\end{split}
\end{equation*} 
\end{proof}
Note that if $\A,\B$ are two downward $\ell$-chains with $|A_{\ell}|=|B_{\ell}|$ and they have the same step sizes (but possibly in a different order) then they have the same weight.
Let us continue with the next claimed property of the weight function. Let $\A$ and $\B$ be two $\ell$-chains with $h(\A)<h(\B)$, recalling the definition that if $\A$ is an $\ell$-chain then $h(\A)=|A_{\ell}\setminus A_1|$. Note that it is not always the case that $\w(\A)>\w(\B)$ - for instance the chain $\{\emptyset, [n]\}$ has maximal weight ($=1$) and maximal height. But if we avoid wandering too far off from the middle layer then our claim will hold.

\begin{prop}\label{weightsdecreasealot}
Let $a_1,\ldots,a_{k-1}$ and $b_1,\ldots,b_{k-1}$ be positive integers such that $a_i\leq b_i$ for all $i\in [k]$ and strict inequality holds for at least one $i$. Suppose that $\sum_i a_i \leq \sqrt{n}\log^{2/5}n$ and let $\A,\B$ be $k$-chains in $\P_{n,d}$ with step sizes $\{a_i\}_{i=1}^{k-1}$ and $\{b_i\}_{i=1}^{k-1}$ respectively. Then $\w(\A)\geq \w(\B) n^{(h(\B)-h(\A))/3}$.
\end{prop}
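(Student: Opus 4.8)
The plan is to pass to a single closed form for the weight and then estimate the ratio $\w(\B)/\w(\A)$. By Proposition~\ref{weightformula}, whether a $k$-chain $\A$ with step sizes $a_1,\dots,a_{k-1}$ is upward or downward its weight can be written uniformly as $\w(\A)=\bigl(\prod_{i=1}^{k-1}a_i!\bigr)\big/(m_\A)_{(t)}$, where $t:=\sum_i a_i=h(\A)$, $(x)_{(t)}:=x(x-1)\cdots(x-t+1)$, and $m_\A:=|A_k|$ if $\A$ is downward while $m_\A:=n-|A_1|$ if $\A$ is upward. The first step is to record the crude bounds $n/2\le m_\A\le n/2+d$: the upper bound is immediate from membership of $\A$ in $\P_{n,d}$, while for $k\ge 2$ a downward chain forces $|A_k|>n/2$ (else both endpoints lie below the middle, making the chain upward) and an upward chain forces $|A_1|<n/2$. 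Writing $t':=\sum_i b_i=h(\B)$, the same bounds hold for $m_\B$, and the hypothesis gives $t'-t=h(\B)-h(\A)\ge 1$.

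Next I would expand
$$\frac{\w(\B)}{\w(\A)}=\frac{\prod_i b_i!}{\prod_i a_i!}\cdot\frac{(m_\A)_{(t)}}{(m_\B)_{(t')}}$$
and bound the three factors separately. For the factorials, $b_i!/a_i!=(a_i+1)\cdots b_i\le b_i^{\,b_i-a_i}\le (t')^{\,b_i-a_i}$, hence $\prod_i b_i!/a_i!\le (t')^{\,t'-t}$. For the falling factorials, write $(m_\B)_{(t')}=(m_\B)_{(t)}\cdot(m_\B-t)(m_\B-t-1)\cdots(m_\B-t'+1)$; the last product has $t'-t$ terms, each at least $m_\B-t'\ge n/4$ once $n$ is large, while $(m_\A)_{(t)}/(m_\B)_{(t)}=\prod_{j=0}^{t-1}\frac{m_\A-j}{m_\B-j}\le\bigl(1+\tfrac{4d}{n}\bigr)^t\le e^{4dt/n}$, since each factor is at most $1+\frac{m_\A-m_\B}{m_\B-t}$ with $m_\A-m_\B\le d$ and $m_\B-t\ge n/4$. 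Combining these gives $\w(\B)/\w(\A)\le\bigl(\tfrac{4t'}{n}\bigr)^{t'-t}e^{4dt/n}$.

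It then remains to insert the quantitative constraints of Section~\ref{setupsection}. From $k\le\log^{1/100}n$ we have $d\le 10\sqrt n\,\log^{51/100}n$, so $t'\le d$ yields $\tfrac{4t'}{n}\le n^{-0.49}$ for $n\ge n_0$, and together with $t\le\sqrt n\,\log^{2/5}n$ we get $4dt/n\le 40\log^{91/100}n\le 0.01\log n$, i.e.\ $e^{4dt/n}\le n^{0.01}$. Therefore $\w(\B)/\w(\A)\le n^{-0.49(t'-t)+0.01}\le n^{-0.48(t'-t)}\le n^{-(t'-t)/3}$ using $t'-t\ge 1$, which rearranges to $\w(\A)\ge\w(\B)\,n^{(h(\B)-h(\A))/3}$.

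The only delicate point I anticipate is bookkeeping rather than genuine difficulty: one must check that the uniform weight formula and the bounds $m_\A,m_\B\in[n/2,n/2+d]$ hold regardless of which of $\A,\B$ is upward or downward (they need not agree in type), and one must make sure the subpolynomial error $e^{4dt/n}=n^{o(1)}$ produced by shifting the outer level of a chain by $O(d)$ is genuinely swamped by the $n^{-\Theta(t'-t)}$ gain coming from the $t'-t$ extra ``missing'' unit steps in $\B$. The constants $0.49$ and $0.48$ are chosen with a comfortable margin precisely so that this comparison survives once $n_0(k,\eps)$ is taken large enough.
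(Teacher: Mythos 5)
Your proof is correct and follows essentially the same route as the paper: factor $\w(\B)/\w(\A)$ into the ratio of step-factorial products and the ratio of falling factorials, bound the former by $d^{\,h(\B)-h(\A)}$, pull $h(\B)-h(\A)$ extra factors of size $\approx n$ out of the denominator falling factorial, absorb the subpolynomial correction from shifting the anchor level, and then feed in $d=\lfloor 10k\sqrt{n\log n}\rfloor$, $k\le\log^{1/100}n$, and $h(\A)\le\sqrt n\log^{2/5}n$. The only real difference is organizational: you package both the downward and upward cases into the single closed form $\w(\A)=\prod_i a_i!/(m_\A)_{(h(\A))}$ with $m_\A\in[n/2,\,n/2+d]$, which cleanly handles the possibility that $\A$ and $\B$ differ in orientation, whereas the paper assumes both are downward and remarks the other cases are similar; your check that downwardness forces $|A_k|>n/2$ and upwardness forces $|A_1|<n/2$ (for $k\ge 2$) is exactly the right justification for this unification.
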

\begin{proof}
Without loss of generality we may assume that both chains $\A,\B$ are downward, the proof is similar if one (or both) of them is upward. Then by Proposition~\ref{weightformula} we have 
$$\w(\A)=\frac{|A_1|!\prod_i a_i!}{|A_k|!}, \qquad \w(\B)=\frac{|B_1|! \prod_i b_i!}{|B_k|!}.$$
Then we get (using the falling factorial notation $s_{(t)}=s(s-1)\cdot\ldots\cdot (s-t+1)$)
\begin{equation*}
\begin{split}
\frac{\w(\B)}{\w(\A)}&=\frac{\prod_i b_i!}{\prod_j a_j!}\frac{|A_k|_{(h(\A))}}{|B_k|_{(h(\B))}}\leq 
d^{h(\B)-h(\A)}\left(\frac{|A_k|}{\frac{n}{2}-d}\right)^{h(\A)}\left(\frac{1}{n/3}\right)^{h(\B)-h(\A)}\\
&\leq n^{-0.49(h(\B)-h(\A))} \left(1+\frac{30k\sqrt{n\log n}}{n/2}\right)^{\sqrt{n}\log^{2/5 }n} 
\\&\leq n^{-0.49(h(\B)-h(\A))} e^{60\log^{0.91}n}\leq n^{-(h(\B)-h(\A))/3},
\end{split}
\end{equation*}
where in the first line we used that $b_i\leq d$ for all $i$ and that $|B_1|\geq \frac{n}{2}-d\geq n/3$, in the second line we used that $|A_k|\leq \frac{n}{2}+d$, (\ref{kupperbound}) and that $d=\lfloor 10k\sqrt{n\log n}\rfloor$, and in the last line we used  (\ref{kupperbound}).
\end{proof}

We further show that if two chains have the same step sizes then their weight decreases with their distance from the middle layer. Given a $k$-chain $\A=\{A_1,\ldots,A_k\}$ with $A_1\subsetneq\ldots\subsetneq A_k$, recall the definition
$$d(\A):=\max\{||A_k|-n/2|, ||A_1|-n/2|\}.$$

\begin{lemma}\label{weightsdecreaselittle}
Given positive integers $a_1,\ldots,a_{k-1}$, let $\A,\B$ be two $k$-chains in $\P(n)$ with step sizes $a_1,\ldots, a_{k-1}$, satisfying $d(\A)>d(\B)$. Then $\w(\A)<\w(\B)$ and in fact $\w(\B)/\w(\A)\geq 1+h(\A)/n$.
\end{lemma}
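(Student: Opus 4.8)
The plan is to reduce everything to the explicit product formula in Proposition~\ref{weightformula} and then compare the two products term by term. Write $h=h(\A)=h(\B)=\sum_i a_i$ (the heights agree since the step sizes agree). Without loss of generality both chains are downward (the upward case is symmetric after replacing each set by its complement, which preserves step sizes and swaps the two formulae in Proposition~\ref{weightformula}; if one chain is downward and the other upward one reflects only that one). For a downward $k$-chain with top set of size $m$ and step sizes $a_1,\dots,a_{k-1}$, Proposition~\ref{weightformula} gives
\[
\w=\frac{(m-h)!}{m!}\prod_{i=1}^{k-1}a_i!=\frac{\prod_i a_i!}{m_{(h)}},
\]
where $m_{(h)}=m(m-1)\cdots(m-h+1)$ is the falling factorial. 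So if $\A$ has top size $m_{\A}$ and $\B$ has top size $m_{\B}$, then $\w(\B)/\w(\A)=m_{\A,(h)}/m_{\B,(h)}$, and the whole statement becomes the claim that this ratio is $\geq 1+h/n$ whenever $d(\A)>d(\B)$.

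The first substantive step is to check that $d(\A)>d(\B)$ forces $m_{\A}>m_{\B}$ for downward chains. Indeed, for a downward chain the bottom set is the one closer to $n/2$, so $d(\A)=|m_{\A}-n/2|=m_{\A}-n/2$ once $m_{\A}\geq n/2$; more carefully, since the chain has positive height its top set lies strictly above $n/2$ or its bottom set strictly below, and in either case $d(\A)$ is a strictly increasing function of $m_{\A}$ on the relevant range (one should spell out the small-case bookkeeping when the chain straddles $n/2$, but the monotonicity is clear because translating a fixed-length interval $[m-h,m]$ away from the center $n/2$ increases the maximal endpoint-distance). Hence $d(\A)>d(\B)$ gives $m_{\A}\geq m_{\B}+1$.

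The second step is the quantitative bound. Since $m_{\A}\geq m_{\B}+1$, write
\[
\frac{m_{\A,(h)}}{m_{\B,(h)}}\geq\frac{(m_{\B}+1)_{(h)}}{m_{B,(h)}}=\prod_{j=0}^{h-1}\frac{m_{\B}+1-j}{m_{\B}-j}=\prod_{j=0}^{h-1}\left(1+\frac{1}{m_{\B}-j}\right)\geq 1+\sum_{j=0}^{h-1}\frac{1}{m_{\B}-j}\geq 1+\frac{h}{m_{\B}}\geq 1+\frac{h}{n},
\]
using $1+\sum x_j\leq\prod(1+x_j)$ for nonnegative $x_j$ and $m_{\B}\leq n$. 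This already gives both $\w(\A)<\w(\B)$ and the claimed ratio bound. I do not expect a genuine obstacle here; the only thing requiring care is the case analysis defining $d(\cdot)$ for chains straddling the middle layer and confirming that "$d(\A)>d(\B)$" translates to "$m_{\A}>m_{\B}$" for downward chains (and symmetrically for upward chains, and the mixed case) — that is where a reader might want the argument made fully explicit, but it is purely elementary.
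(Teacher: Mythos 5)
Your proof is correct and takes essentially the same route as the paper: both reduce to the falling-factorial formula from Proposition~\ref{weightformula}, observe that $d(\A)>d(\B)$ forces $|A_k|>|B_k|$ for downward chains, and bound $|A_k|_{(h)}/|B_k|_{(h)}\geq 1+h/|B_k|\geq 1+h/n$ (the paper gets there via $(|A_k|/|B_k|)^h\geq(1+1/|B_k|)^h$ and Bernoulli, you via the product expansion $\prod_j(1+1/(|B_k|-j))$; these are cosmetic variants). Your extra care in spelling out why $d(\A)>d(\B)$ implies $|A_k|>|B_k|$ is a useful addition the paper leaves implicit.
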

\begin{proof}
We assume that both chains are downward, the other cases are handled similarly. The weight of a chain $\A$ is given by 
$$\w(\A)=\frac{\prod a_i!}{|A_k|_{(h(\A))}}$$ which, if the $a_i$-s and hence $h(\A)$ are fixed, is a decreasing function of $|A_k|$. The ratio $\w(\B)/\w(\A)$ is bounded below by
$$\frac{\w(\B)}{\w(\A)}\geq \left(\frac{|A_k|}{|B_k|}\right)^{h(\A)}\geq \left(\frac{|B_k|+1}{|B_k|}\right)^{h(\A)}\geq 1+\frac{h(\A)}{n}.$$
\end{proof}

\begin{lemma}\label{weightsdecreaselittle2}Let $a_1,\ldots,a_{k-1}$ and $b_1,\ldots,b_{k-1}$ be positive integers such that $a_i=b_i$ for all but one $i\in [k]$, and if $j$ is the index where the two sequences differ then $b_j=a_j+1$. Suppose that $\A,\B$ are $k$-chains in $\P_{n,d}$ with step sizes $\{a_i\}_{i=1}^{k-1}$ and $\{b_i\}_{i=1}^{k-1}$ respectively such that $d(\A)\leq \lceil (h(\A)+1)/2 \rceil$. Then $\w(\A)\geq \w(\B) n^{1/3}$.
\end{lemma}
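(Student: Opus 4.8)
The plan is to bound $\w(\A)$ from below and $\w(\B)$ from above and then divide, exactly as in the proofs of Proposition~\ref{weightsdecreasealot} and Lemma~\ref{weightsdecreaselittle}. Write $h:=h(\A)$; since $\mathbf{a}$ and $\mathbf{b}$ agree except that $b_j=a_j+1$, we have $h(\B)=h+1$. By the symmetry between upward and downward chains it suffices to treat the case where $\A$ and $\B$ are both downward: in the other three cases one replaces $|A_k|$ by $n-|A_1|$ and/or $|B_k|$ by $n-|B_1|$ throughout, and every estimate below holds verbatim. So assume $\A,\B$ are downward and use Proposition~\ref{weightformula} in the form $\w(\A)=\bigl(\prod_i a_i!\bigr)/|A_k|_{(h)}$ and $\w(\B)=\bigl(\prod_i b_i!\bigr)/|B_k|_{(h+1)}$.

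For the lower bound on $\w(\A)$: since $\A$ is downward, $|A_k|\ge n/2$ and $d(\A)=|A_k|-n/2$, so the hypothesis $d(\A)\le\lceil(h+1)/2\rceil$ says precisely that $|A_k|\le n/2+\lceil(h+1)/2\rceil$. Writing $p:=\lceil n/2\rceil+\lceil(h+1)/2\rceil$ this gives $|A_k|_{(h)}\le p_{(h)}$, hence $\w(\A)\ge\bigl(\prod_i a_i!\bigr)/p_{(h)}$. For the upper bound on $\w(\B)$ the one subtlety of the proof appears. I will \emph{not} use the weak bound $|B_k|\ge n/2$: carrying that through leaks a factor of size $\exp(\Theta(h^2/n))=n^{\Theta(k^2)}$ into the final ratio and sinks the argument. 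Instead I use the sharp consequence of "$\B$ is downward": from $||B_k|-n/2|\ge||B_1|-n/2|$ and $|B_1|=|B_k|-(h+1)$ one gets $|B_k|\ge(n+h+1)/2$, hence $|B_k|\ge p-1$. Therefore $|B_k|_{(h+1)}\ge(p-1)_{(h+1)}$ and $\w(\B)\le\bigl(\prod_i b_i!\bigr)/(p-1)_{(h+1)}$.

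Dividing and using $\prod_i b_i!/\prod_i a_i!=b_j!/a_j!=a_j+1=b_j\le h(\B)=h+1$, I get
$$\frac{\w(\B)}{\w(\A)}\ \le\ (h+1)\,\frac{p_{(h)}}{(p-1)_{(h+1)}}\ =\ (h+1)\,\frac{p}{(p-h)(p-h-1)},$$
the last equality being cancellation of the $h-1$ factors common to numerator and denominator. Since $\A,\B\subseteq\P_{n,d}$ we have $h\le d=\lfloor 10k\sqrt{n\log n}\rfloor$, so for $n$ large $p\le 0.51n$ while $p-h,\,p-h-1\ge 0.49n$, whence the right-hand side is at most $(h+1)\cdot 3/n\le 33k\sqrt{n\log n}/n=33k\sqrt{\log n}/\sqrt{n}$. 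By~(\ref{kupperbound}), $k\le\log^{1/100}n$, so this is at most $33\log^{0.51}n/\sqrt{n}\le n^{-1/3}$ once $n\ge n_0$. Thus $\w(\A)\ge\w(\B)\,n^{1/3}$, as required.

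The only genuine obstacle is the one flagged in the second paragraph: "$\B$ is downward" must be used in its sharp form $|B_k|\ge(n+h(\B))/2$ rather than merely $|B_k|\ge n/2$. The reason this matters is that the hypothesis $d(\A)\le\lceil(h+1)/2\rceil$ pins $|A_k|$ to within $O(1)$ of $n/2+h/2$, and the extremal $\B$ — the one maximizing $\w(\B)$ — also has $|B_k|$ within $O(1)$ of $n/2+h/2$; once $|A_k|$ and $|B_k|$ are this close the telescoping ratio of falling factorials collapses to a single factor of order $n$, and then $h\le d$ together with~(\ref{kupperbound}) finishes the estimate with room to spare. Everything else is routine arithmetic, and the upward cases require no new idea.
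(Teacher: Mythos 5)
Your proof is correct and follows essentially the same route as the paper: reduce to downward chains via Proposition~\ref{weightformula}, express $\w(\B)/\w(\A)$ as $|A_k|_{(h)}b_j/|B_k|_{(h+1)}$, and bound it using the fact that $|B_k|$ cannot be smaller than (roughly) $|A_k|$ together with $b_j\ll n$. The paper gets there slightly more directly by noting that the hypothesis on $d(\A)$ forces $d(\B)\ge d(\A)$, hence $|B_k|\ge|A_k|$, so $|A_k|_{(h)}/|B_k|_{(h+1)}\le 1/(|B_k|-h)$; you instead pin both endpoints to a common reference $p$ and cancel telescoping factors, and you use the sharper estimate $b_j\le h+1$ in place of the paper's cruder $b_j\le d$ --- cosmetic variations that do not change the nature of the argument.
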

\begin{proof}
Without loss of generality we may assume that both chains $\A,\B$ are downward, the proof is similar if one (or both) of them is upward. Note that the condition $d(\A)\leq \lceil (h(\A)+1)/2 \rceil$ means that amongst all chains with step sizes $a_1,\ldots,a_{k-1}$, $\A$ has the minimum distance $d(\A)$. Hence we have $d(\B)\geq d(\A)$ and so $|B_k|\geq |A_k|$. Then by Proposition~\ref{weightformula} we have 
$$\w(\A)=\frac{|A_1|!\prod_i a_i!}{|A_k|!}, \qquad \w(\B)=\frac{|B_1|! b_j \prod_i a_i!}{|B_k|!}.$$
Then, using that $b_j\leq d=\lfloor 10k\sqrt{n\log n}\rfloor$, we have
\begin{equation*}
\begin{split}
\frac{\w(\B)}{\w(\A)}&=\frac{|A_k|_{(h(\A))}b_j}{|B_k|_{(h(\A)+1)}}\leq \frac{b_j}{|B_k|-h(\A)}\left(\frac{|A_k|}{|B_k|}\right)^{h(\A)}\leq \frac{20k\sqrt{n\log n}}{n/4}\cdot 1\leq n^{-1/3}.
\end{split}
\end{equation*}
\end{proof}

Finally we prove in this subsection a weighted supersaturation result for families whose size exceeds $\Sigma(n,k-1)$.
For a family $\F\subseteq \P(n)$  and integers $a_1,\ldots,a_{k-1}\geq 1$, define 
$$\Phi(\F,a_1,\ldots,a_{k-1}):=\{(A_1,\ldots,A_k)\in\F^k: A_1\subsetneq \ldots \subsetneq A_k, \text{ and } |A_{i+1}\setminus A_i|\geq a_i \text{ for all } i\in [k-1]\}.$$
 Now let
$$\W_{a_1,\ldots,a_{k-1}}(\F):=\sum_{(A_1,\ldots,A_k)\in\Phi(\F,a_1,\ldots,a_{k-1})}\w(A_1,\ldots,A_k).$$
Using these definitions we can state the promised supersaturation lemma.
\begin{lemma}\label{supersat}
Let $Q,a_1,\ldots, a_{k-1}$ be positive integers and let $\F\subset \P(n)$ be a family of size $Q$. Then
$$\W_{a_1,\ldots,a_{k-1}}(\F)\geq \W_{a_1,\ldots,a_{k-1}}(\G_Q).$$
\end{lemma}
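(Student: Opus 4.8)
The plan is to prove this by a symmetric-chain-decomposition averaging argument, reducing the weighted count $\W_{\mathbf a}(\F)$ to a statement about ordinary $k$-chains in a random SCD. Recall that $\w(A_1,\dots,A_k)$ is exactly the probability that a uniformly random SCD of $\P(n)$ contains the chain $A_1\subsetneq\cdots\subsetneq A_k$ as a subchain of one of its symmetric chains. I would first observe that, for a fixed SCD $\X$ and a fixed family $\F$, the number of $k$-tuples $(A_1,\dots,A_k)\in\F^k$ with $A_1\subsetneq\cdots\subsetneq A_k$, step sizes at least $a_i$, and all $A_i$ lying on a common chain of $\X$, can be written as $\sum_{X\in\X} N_{\mathbf a}(\F\cap X)$, where for a chain $X$ (a totally ordered set) $N_{\mathbf a}(S)$ counts increasing $k$-tuples in $S$ with consecutive gaps $\geq a_i$ in the ambient linear order on $X$. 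Taking expectations over a random SCD and using the defining property of $\w$, one gets
\[
\W_{\mathbf a}(\F)=\mathbb{E}_{\X}\Big[\sum_{X\in\X} N_{\mathbf a}(\F\cap X)\Big].
\]

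The second step is a one-dimensional (chain) inequality: for each symmetric chain $X$, among all subsets of $X$ of a given size, the count $N_{\mathbf a}$ of ``spread-out'' increasing $k$-tuples is minimized by taking a set of consecutive elements located as centrally as possible in $X$ — i.e. the restriction to $X$ of a centered family. This is a routine compression/shifting statement on a single path: moving an element of $\F\cap X$ toward the center of $X$ cannot increase the number of such $k$-tuples, because $X$ is a symmetric chain and the ``middle'' of $X$ corresponds to being near level $n/2$. One has to be slightly careful that $X$ may have odd or even length and that the center of each chain of the SCD sits at the middle layer of $\P(n)$, but these are exactly the bookkeeping conventions baked into the definition of ``centered in $\P'$''. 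Crucially, $\G_Q$ restricted to any chain $X$ of any SCD is a centered subset of $X$: since $\G_Q$ consists of complete layers near the middle plus an initial segment of one partial layer, its trace on each symmetric chain is an interval around the center (the partial layer contributing at most one extra element at one end). Hence for every SCD $\X$ we would have $\sum_{X\in\X}N_{\mathbf a}(\G_Q\cap X)\le \sum_{X\in\X}N_{\mathbf a}(\F'\cap X)$ for any $\F'$ with the same layer sizes as $\G_Q$.

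The gap to close is that an arbitrary family $\F$ of size $Q$ need not have the same intersection sizes with the layers as $\G_Q$, so a single fixed SCD does not immediately compare $\F$ with $\G_Q$. To handle this I would run a layer-compression argument at the level of the weighted quantity $\W_{\mathbf a}$ itself: show that replacing $\F$ by a family obtained by pushing sets from a far layer to a nearer (emptier) layer does not increase $\W_{\mathbf a}$, using Lemma~\ref{weightsdecreaselittle} and the SCD identity above to compare term by term (a set on a more central part of a chain produces spread-out $k$-tuples of no greater total weight than the corresponding set further out, since each individual $k$-chain through a more central set has larger or equal weight is the wrong direction — rather, one uses that centralizing can only decrease the \emph{number} of qualifying tuples on each chain while the weights of the surviving ones only go up in a controlled way, or more cleanly, one applies the monotonicity directly to $\W$ via the expectation formula and the one-dimensional inequality applied after each push). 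Once $\F$ has been compressed to have the same layer profile as $\G_Q$, the previous paragraph finishes the proof. The main obstacle is making the compression step rigorous: one must verify that each elementary push of a single set toward the center is weakly monotone for $\W_{\mathbf a}$ simultaneously accounting for the change in which $k$-chains exist and the change in their weights, and that this process terminates at the centered profile; this is where the structural formula $\W_{\mathbf a}(\F)=\mathbb{E}_{\X}[\sum_X N_{\mathbf a}(\F\cap X)]$ does the real work, since it converts the two-dimensional problem into a family of one-dimensional path problems where centralizing is transparently optimal.
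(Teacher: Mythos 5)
Your setup coincides with the paper's: interpret $\w$ as the probability of containment in a uniformly random SCD, write $\W_{\mathbf a}(\F)$ as the expectation over a random SCD $\X$ of $\sum_{X\in\X} N_{\mathbf a}(\F\cap X)$, and reduce to a per-chain count. Your one-dimensional observation, that $N_{\mathbf a}(S)\ge f(|S|)$ where $f(p)$ is the count on a skipless $p$-chain, is also the paper's. But the step you flag as a gap is precisely where you depart from the paper, and the route you sketch does not go through. You want to reduce to the case where $\F$ has the same layer profile as $\G_Q$ via a compression on $\W_{\mathbf a}$, but you cannot even fix the direction of the monotonicity (your parenthetical visibly reverses itself), and indeed there is no such clean monotonicity: individual $k$-chains through a more central set are \emph{heavier}, not lighter (Lemma~\ref{weightsdecreaselittle}), so a single push of a set toward the middle can increase $\W_{\mathbf a}$.

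The paper's finish is different and bypasses this entirely. It proves $f$ is \emph{convex}: $f(p+1)-f(p)$ counts $k$-chains with the required step sizes through the bottom element of a skipless $(p+1)$-chain, which is nondecreasing in $p$. It then observes that for every SCD $\X$ the trace sizes $\{|\G_Q\cap X_j|\}$ form the ``as evenly distributed as possible'' (water-filling) multiset among all $\{c_j\}$ with $\sum_j c_j=Q$ and $0\le c_j\le|X_j|$; since the multiset of chain lengths $\{|X_j|\}$ does not depend on the choice of SCD, this is the same multiset for every $\X$. Convexity then gives $\sum_j f(|\F\cap X_j|)\ge\sum_j f(|\G_Q\cap X_j|)$ pointwise for every SCD, with no need for $\F$ and $\G_Q$ to have matching layer profiles. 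Taking expectations finishes. The idea you are missing is the convexity of $f$; once you have it, the compression you were trying to engineer becomes unnecessary.
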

\begin{proof}
Let $\X_1,\X_2$ be two arbitrary SCDs with chains $A_1,\ldots,A_{N}$ and $B_1,\ldots,B_N$ respectively, where $N=\binom{n}{\lfloor n/2\rfloor}$, and consider the two multisets of integers $\A=\{|A_i\cap \G_Q|:i\in[N]\}$ and $\B=\{|B_i\cap \G_Q|:i\in[N]\}$. Then the two multisets are the same up to permuting their elements.

Let $f(p)$ be the least possible number of $k$-chains with step sizes at least $a_1,\ldots,a_{k-1}$ contained in a chain of length $p$. Then $f(p)$ is exactly equal to the number of $k$-chains with step sizes at least $a_1,\ldots,a_{k-1}$ contained in a \emph{skipless} chain of length $p$. Note that then $f(p+1)-f(p)$ counts the number of $k$-chains with step sizes at least $a_1,\ldots,a_{k-1}$ contained in a skipless chain of length $p+1$ that contain the bottom element of the skipless chain. Hence $f(p+1)-f(p)\geq f(p)-f(p-1)$ for all $p$ and thus $f(p)$ is a convex function of $p$.  Hence every SCD contains at least as many $k$-chains with step sizes at least $a_1,\ldots,a_{k-1}$ from $\F$ as it does from $\G_Q$ where the intersection sizes with the chains are distributed as evenly as possible.

Take a random SCD $\X$ and count the number of $k$-chains with step sizes at least $a_1,\ldots,a_{k-1}$ in $\F$ that are contained in $\X$, call this number $x(\F,\X)$ and similarly define $x(\G_Q,\X)$. Then by the above argument we had for every $\X$ that $x(\F,\X)\geq x(\G_Q,\X)$. Every $k$-chain is contained in $\X$ with probability equal to its weight. Taking expectations we have
$$\W_{a_1,\ldots,a_{k-1}}(\F)={\rm I\kern-.3em E}(x(\F,\X))\geq {\rm I\kern-.3em E}(x(\G_Q,\X))=\W_{a_1,\ldots,a_{k-1}}(\G_Q).$$
\end{proof}

To conclude this subsection we briefly indicate how Lemma~\ref{supersat} implies for example a special case of Theorem~\ref{kleitman}, stating that if a family $\F$ has $\binom{n}{\lfloor n/2\rfloor}+x\leq \Sigma(n,2)$ elements then it contains at least $x\lfloor 1+ n/2\rfloor$ comparable pairs. Indeed if we set $Q=\binom{n}{\lfloor n/2 \rfloor}+x$ then Lemma~\ref{supersat} states that $\W_1(\F)\geq x$. But every comparable pair except for the pair $\{\emptyset,[n]\}$ has weight at most $\lfloor 1+ n/2\rfloor^{-1}$.  Moreover the only comparable pairs of such maximum weight are the ones centered on the two middle layers, hence it is best to take such pairs greedily (i.e.~take those pairs first which have the largest weight). Hence the result follows  if we can show that e.g.~an optimal family cannot contain the empty set.

The above paragraph illustrates some of the main ideas of the proof of the main result. We start with a collection of inequalities given to us by Lemma~\ref{supersat}. We will claim that satisfying these inequalities greedily is the best one can do, assuming the optimal family cannot contain any small sets. The last step is then to show that this is indeed the case, i.e.~if a family contains very small sets then it is bound to contain many more $k$-chains than $\G_M$.

\subsection{Solving Kleitman's conjecture in $\P_{n,d}$}

We are now ready to prove Proposition~\ref{mainmiddle}, in fact we will prove something more. We define the weighted hypergraph $\HH=\HH_{n,d,k}$ to be the $k$-uniform hypergraph on vertex set $V(\HH)=\P_{n,d}$, edges corresponding to $k$-chains in $\P_{n,d}$ and the weight of an edge is given by the weight of the $k$-chain. A function $f:E(\HH)\rightarrow [0,1]$ is called a \emph{measured subhypergraph} of $\HH$ and for an edge $e$ we call $f(e)$ the measure of $e$.

Note that every family $\F$ corresponds to a measured subhypergraph $f_\F$ given by $f(e)=1$ if the $k$-chain $e$ is contained in $\F$, and $f(e)=0$ otherwise. That is, $f_\F$ is the \emph{characteristic function} corresponding to the family $\F$. We say that a measured subhypergraph $f$ is $Q$-\emph{good} if it satisfies the conclusion of Lemma~\ref{supersat}, that is, if for all positive integers $a_1,\ldots,a_{k-1}$ we have
$$\sum_{(A_1,\ldots,A_k)\in\Phi(\P_{n,d},a_1,\ldots,a_{k-1})}\w(A_1,\ldots,A_k)f(A_1,\ldots,A_k)\geq \W_{a_1,\ldots,a_{k-1}}(\G_Q).$$
Note that by Lemma~\ref{supersat} if $\F$ is a family of size at least $M$ in $\P_{n,d}$ then the corresponding characteristic function $f_{\F}$ is $M$-good. The \emph{size} of a measured subhypergraph $f$ is defined as
$$|f|=\sum_{e\in E(\HH)}f(e).$$
Recall the definition of $\Phi^*(\F,\mathbf{a})$ and $<^*_{\F,\mathbf{a}}$ from Section~\ref{setupsection}. For a family $\F\subseteq \P_{n,d}$, a measured subhypergraph $f$ and a vector of positive integers $\mathbf{a}=(a_1,\ldots,a_{k-1})$ denote by $f_{\F,\mathbf{a}}$ the restriction of $f$ to the subhypergraph of $\HH_{n,d,k}$ whose edges are the elements of $\Phi^*(\F,\mathbf{a})$.  We say that $f$ is $(\F,\mathbf{a})$\emph{-compressed} if there is a chain $\A\in\Phi^*(\F,\mathbf{a})$ such that if $\B<^*_{\F,\mathbf{a}}\A$ then $f(\B)=1$ and if $\A<^*_{\F,\mathbf{a}}\B$ then $f(\B)=0$. Similarly define for a family $\F\subseteq \P_{n,d}$, a measured subhypergraph $f$ and vector $\mathbf{a}=(a_1,\ldots,a_{k-1})$ the $(\F,\mathbf{a})$-compression of $f$, which is also a measured subhypergraph, denoted by $c[f,\F,\mathbf{a}]$, as follows. 
\begin{itemize}
\item If $e\notin \Phi^*(\F,\mathbf{a})$ then $c[f,\F,\mathbf{a}](e)=f(e)$. 
\item $c[f,\F,\mathbf{a}]$ is $(\F,\mathbf{a})$-compressed.
\item $|c[f,\F,\mathbf{a}]_{\F,\mathbf{a}}|=|f_{\F,\mathbf{a}}|$.
\end{itemize}
Observe that we always have $|f|=|c[f,\F,\mathbf{a}]|$, i.e.~compression does not change the size of $f$. We say $f$ is \emph{completely compressed} if $f$ is $(\P_{n,d},\mathbf{a})$-compressed for every vector of positive integers $\mathbf{a}=(a_1,\ldots,a_{k-1})$.

\textbf{Example.} Let $n=10$ and $k=2$. Define the families $\F_1=\binom{[n]}{4}\cup \binom{[n]}{6}$, $\F_2=\F_1\cup \binom{[n]}{7}$ and $\F_3=\F_2\cup\binom{[n]}{8}$. Let $\mathbf{a}=(2)$, i.e.~we consider comparable pairs with set difference $2$. Let $f_{\F_1},f_{\F_2},f_{\F_3}$ be the corresponding characteristic functions. Then $f_{\F_1}$ is $(\P(n),\mathbf{a})$-compressed, as the only comparable pairs with set difference $2$ in $\F_1$ are those pairs closest possible to the middle layer, hence of largest weight. Moreover since all comparable pairs in $\F_1$ have set difference $2$, we conclude that $f_{\F_1}$ is completely compressed. Since in $\F_2$ there are no new comparable pairs of set difference exactly $2$, $f_{\F_2}$ is also $(\P(n),\mathbf{a})$-compressed. For $\mathbf{b}=(1)$ however, $f_{\F_2}$ is not $(\P(n),\mathbf{b})$-compressed, as $f_{\F_2}(\{123456,1234567\})=1$ but e.g. $f_{\F_2}(\{1234,12345\})=0$. Similarly $f_{\F_3}$ is not $(\P(n),\mathbf{a})$-compressed as $f_{\F_3}(\{123456,12345678\})=1$ but $f_{\F_3}(12345,1234567)=0$. Note also that for every $Q$ we have that the function $f_{\G_Q}$ corresponding to the centered family $\G_Q$ is completely compressed.

\begin{prop}\label{compressed}
Let $Q>0$ be an integer, $\mathbf{a}=(a_1,\ldots,a_{k-1})$ be a vector of positive integers, $f$ a $Q$-good measured subhypergraph and $\F\subset \P_{n,d}$. Then $c[f,\F,\mathbf{a}]$ is $Q$-good.
\end{prop}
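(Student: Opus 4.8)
The claim is that compressing along the step-size vector $\mathbf{a}$ preserves $Q$-goodness. The key observation is that a $(\F,\mathbf{a})$-compression only moves measure \emph{within} the edge set $\Phi^*(\F,\mathbf{a})$ — that is, among $k$-chains all having precisely step sizes $\mathbf{a}$ — so the only defining inequalities of $Q$-goodness that could possibly be affected are those indexed by vectors $\mathbf{b}=(b_1,\dots,b_{k-1})$ for which some chain with step sizes exactly $\mathbf{a}$ lies in $\Phi(\P_{n,d},\mathbf{b})$, i.e.\ those $\mathbf{b}$ with $b_i\le a_i$ for all $i$. So I would first reduce to checking, for each such $\mathbf{b}\le\mathbf{a}$ coordinatewise, that
$$\sum_{e\in\Phi(\P_{n,d},\mathbf{b})}\w(e)\,c[f,\F,\mathbf{a}](e)\ \ge\ \W_{\mathbf{b}}(\G_Q).$$

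**Main step: a local exchange argument.** Write $S=\Phi^*(\F,\mathbf{a})$ for the edges whose measure changes, and fix a target vector $\mathbf{b}\le\mathbf{a}$. Every $e\in S$ is actually a member of $\Phi(\P_{n,d},\mathbf{b})$ (its step sizes $a_i\ge b_i$), so all of $S$ contributes to the $\mathbf{b}$-inequality, weighted by $\w(e)$. The compression $c[f,\F,\mathbf{a}]$ keeps $|f_{\F,\mathbf a}|=\sum_{e\in S}f(e)$ fixed but redistributes the measure to sit on an initial segment of $<^*_{\F,\mathbf a}$. By the defining property of that order, the initial segment of $S$ of any given size $Q'$ is exactly $\Phi^*(\G_{\F,Q'},\mathbf a)$ for the appropriate $Q'$; the crucial point is that \emph{within} $S$ the weight $\w(e)$ is (weakly) larger for chains closer to the middle layer, and $<^*_{\F,\mathbf a}$ is built from the centered ordering $\G_{\F,\cdot}$, so it is consistent with decreasing weight. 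Concretely, by Lemma~\ref{weightsdecreaselittle} two $k$-chains with the same step sizes $\mathbf a$ satisfy $\w(\A)<\w(\B)$ whenever $d(\A)>d(\B)$, and the centered order fills up layers from the middle outward; hence moving measure to an initial segment of $<^*_{\F,\mathbf a}$ moves it onto the heaviest available edges of $S$. Therefore $\sum_{e\in S}\w(e)\,c[f,\F,\mathbf a](e)\ge \sum_{e\in S}\w(e)\,f(e)$, and since $c[f,\F,\mathbf a]$ and $f$ agree off $S$, the full $\mathbf b$-sum does not decrease. As it was $\ge\W_{\mathbf b}(\G_Q)$ for $f$ ($Q$-goodness), it remains so for $c[f,\F,\mathbf a]$.

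**The delicate point.** The one place requiring care is the claim that $<^*_{\F,\mathbf a}$ orders $S$ compatibly with the weight $\w(\cdot)$ restricted to $S$ — i.e.\ that an initial segment of $<^*_{\F,\mathbf a}$ really does capture the largest-weight elements of $S$. This is not literally a total weight-order (ties and partial layers intervene), so the honest statement is the weaker \emph{majorization}-type fact: for every threshold $t$, among all subsets of $S$ of measure $t$, the initial segment of $<^*_{\F,\mathbf a}$ has maximum total $\w$-weight. This follows because $<^*_{\F,\mathbf a}$ refines the partial order ``$\A$ precedes $\B$ if $d(\A)\le d(\B)$'' (since $\G_{\F,Q'}$ is centered and the ordering has each $\Phi^*(\G_{\F,Q'},\mathbf a)$ as an initial segment), and Lemma~\ref{weightsdecreaselittle} says $\w$ is monotone in $d(\cdot)$ along $S$. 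The rest — that moving a $[0,1]$-measure of fixed total mass onto the heaviest coordinates can only increase a nonnegative weighted sum — is the standard rearrangement inequality for measures, which I would state in a line. I expect verifying this compatibility (handling the partial layer and the within-$S$ weight ties cleanly) to be the only real obstacle; everything else is bookkeeping about which inequalities are touched and the observation that off $\Phi^*(\F,\mathbf a)$ nothing changes.
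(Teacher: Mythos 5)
Your proposal is correct and follows essentially the same route as the paper's proof: reduce to the single inequality over $\Phi^*(\F,\mathbf{a})$ by noting that compression leaves all other edges untouched and that $\Phi^*(\F,\mathbf{a})$ lies entirely inside or outside $\Phi(\P_{n,d},\mathbf{b})$ depending on whether $\mathbf{b}\le\mathbf{a}$ coordinatewise, then invoke Lemma~\ref{weightsdecreaselittle} to see that $<^*_{\F,\mathbf{a}}$ is consistent with decreasing weight, and finish with the rearrangement inequality. Your ``delicate point'' about ties and partial layers is a fair observation but is handled the same way implicitly in the paper, so there is no real divergence.
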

\begin{proof}
We only need to prove that if we denote $\X:=\Phi^*(\F,\mathbf{a})$ then 
$$\sum_{e\in \X}c[f,\F,\mathbf{a}](e)\cdot\w(e)\geq \sum_{e\in \X}f(e)\w(e).$$
This follows from a simple property of the ordering $<^*_{\F,\mathbf{a}}$: note that by the definition of $<^*_{\F,\mathbf{a}}$ and Lemma~\ref{weightsdecreaselittle} we have that if $\A,\B$ are two chains in $\X$ with $d(\A)>d(\B)$ then $\B<^*_{\F,\mathbf{a}}\A$. Hence by Lemma~\ref{weightsdecreaselittle} $c[f,\F,\mathbf{a}]_{\F,\mathbf{a}}$ greedily assigns measure $1$ to the edges in $\X$ of largest weight until it has allocated a total measure equal to $|f_{\F,\mathbf{a}}|$. Since the summation goes over $\X$ both functions in the above inequality can be replaced by their restrictions to $\X$ and then the claim follows from e.g.~the rearrangement inequality\footnote{which states that given numbers $0\leq x_1\leq \ldots \leq x_m$ and $0\leq y_1\leq \ldots\leq y_m$ and a permutation $\pi\in S_m$ we have that $\sum_i x_i y_i \geq \sum_i x_i y_{\pi (i)}$} (see e.g.~\cite{inequalitieshardy},  Section~$10.2$, Theorem~$368$).
\end{proof}

Instead of proving Proposition~\ref{mainmiddle} directly we will show the following stronger statement. As is often the case, the stronger statement will be easier and more natural to prove.
\begin{prop}\label{mainmiddlestrong}
Amongst all $M$-good measured subhypergraphs, $f_{\G_M}$ has the smallest size.
\end{prop}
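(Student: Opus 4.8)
The plan is to show that any $M$-good measured subhypergraph $f$ satisfies $|f|\geq |f_{\G_M}|$ by repeatedly applying compressions and then reducing to a purely combinatorial comparison of weights. First I would observe, via Proposition~\ref{compressed}, that for a fixed vector $\mathbf{a}$ the $(\P_{n,d},\mathbf{a})$-compression $c[f,\P_{n,d},\mathbf{a}]$ remains $M$-good and has the same size as $f$. I would then argue that one can iterate this over all step-size vectors $\mathbf{a}$ (with $\sum a_i$ bounded, since in $\P_{n,d}$ no chain has larger height) to reach a completely compressed $M$-good $f^*$ with $|f^*|=|f|$; some care is needed to argue this process terminates or to pass to a limit, using that each compression is monotone with respect to a suitable potential (e.g.\ the total weight $\sum_e f(e)\w(e)$ is nondecreasing and bounded). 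So it suffices to prove the proposition for completely compressed $M$-good $f$.

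Next I would extract the key structural consequence of "completely compressed": for each $\mathbf{a}$, the restriction $f_{\P_{n,d},\mathbf{a}}$ assigns measure $1$ greedily to the $\Phi^*$-edges of largest weight, which by Lemma~\ref{weightsdecreaselittle} means it fills whole "distance shells" around the middle layer, exactly as $f_{\G_Q}$ does for the appropriate $Q$. I would like to conclude from this that a completely compressed $M$-good $f$ must, layer by layer, contain at least as many sets close to the middle as $\G_M$ does. The cleanest route: suppose for contradiction $|f|<|f_{\G_M}|$. Since $f$ is completely compressed, its "support pattern" is determined by how far out each $\Phi^*$-family reaches; using Lemma~\ref{weightsdecreasealot} and Lemma~\ref{weightsdecreaselittle2}, a chain that uses a larger step size or sits farther from the middle has weight smaller by a factor $n^{\Omega(1)}$. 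Hence to meet the $\mathbf{a}$-th supersaturation bound $\W_{\mathbf{a}}(\G_M)$ with a smaller total measure, $f$ would have to be \emph{more} concentrated near the middle than $\G_M$ on skipless chains ($\mathbf{a}=(1,\dots,1)$) — but $\G_M$ is already maximally concentrated there, so this is impossible unless $|f|\geq|f_{\G_M}|$.

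More concretely, I expect the decisive inequality to come from the $\mathbf{a}=(1,1,\dots,1)$ case: $\W_{1,\dots,1}(f)\geq \W_{1,\dots,1}(\G_M)$, combined with the fact (from Lemma~\ref{weightsdecreaselittle}) that skipless $k$-chains have weights that strictly decrease with distance from the middle, and that the number of skipless $k$-chains "at distance $\leq t$" grows in a controlled way. Writing $|f|$ as a sum of contributions from each distance shell and comparing shell-by-shell against $\G_M$, any deficit in $|f|$ relative to $|f_{\G_M}|$ at some shell must be compensated by a surplus at a strictly smaller shell in order not to violate the weighted bound — but $\G_M$ has no room for surplus at smaller shells since it fills them completely. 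A small discrete optimization / exchange argument then forces $|f|\geq|f_{\G_M}|$. Throughout, the error terms from Proposition~\ref{weightsdecreasealot} and Lemma~\ref{weightsdecreaselittle2} (the $n^{1/3}$ and $n^{(h(\B)-h(\A))/3}$ gaps) are what make the "skipless chains dominate" heuristic rigorous: contributions from chains with skips or with large distance are negligible compared to a single central skipless chain.

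The main obstacle, I expect, is the bookkeeping in the last step: translating "completely compressed and $M$-good" into a clean statement that the measure profile of $f$ across distance shells is componentwise majorized by that of $\G_M$, and then ruling out the seemingly-dangerous scenario where $f$ saves on size by being cleverly distributed among many different step-size vectors simultaneously. Handling all vectors $\mathbf{a}$ at once — rather than one at a time — is delicate, because the same edge can contribute to $\Phi(\cdot,\mathbf{a})$ for many $\mathbf{a}$ (the "at least" condition), so the supersaturation constraints are not independent. I would address this by working primarily with $\Phi^*$ (exact step sizes), where the constraints decouple, and using the $n^{\Omega(1)}$ weight gaps to show that the $\mathbf{a}=(1,\dots,1)$ constraint already dominates the size accounting, with all other $\mathbf{a}$ contributing lower-order corrections that only help.
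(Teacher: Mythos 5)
Your opening moves match the paper's: compress to a completely compressed $M$-good $f^*$ of the same size (the termination worry you flag is actually a non-issue, since $\Phi^*(\P_{n,d},\mathbf{a})$ and $\Phi^*(\P_{n,d},\mathbf{a}')$ are disjoint for $\mathbf{a}\neq\mathbf{a}'$, so the compressions for distinct $\mathbf{a}$'s commute and there are only finitely many relevant $\mathbf{a}$'s; the paper sidesteps it differently, by compactness of $[0,1]^{E(\HH)}$). You also correctly identify that the right objects to compare are the restrictions of $f$ and $f_{\G_M}$ to each $\Phi^*(\P_{n,d},\mathbf{a})$. That is where the agreement ends.

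The gap is in your closing claim that ``the $\mathbf{a}=(1,\dots,1)$ constraint already dominates the size accounting, with all other $\mathbf{a}$ contributing lower-order corrections.'' This is false: the number of $k$-chains in $\G_M$ with, say, step sizes $(2,1,\dots,1)$ is of the same order as the number of skipless $k$-chains (both roughly $r^{k-1}\binom{n}{\lfloor n/2\rfloor}$ up to constants), so the non-skipless shells are not lower-order in $|f_{\G_M}|$ and cannot be absorbed as error terms. More importantly, the weight-gap lemmas give gaps between weights of chains of different step sizes; they do not make contributions to the \emph{unweighted} size $|f|$ from non-skipless chains negligible. Consequently your ``deficit must be compensated by a surplus at a smaller shell, but $\G_M$ leaves no room'' reasoning does not close: the supersaturation inequalities constrain weighted sums, which permit unweighted deficits to be offset by heavier edges, and you never rule this out. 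What the paper actually does is a two-case extremal exchange on the $\Phi^*$-level sizes $|g_{\mathbf{a}}|$ versus $|f_{\mathbf{a}}|$. Case~1: if there is a minimal $p=\sum a_i$ with $|g_{\mathbf{a}}|>|f_{\mathbf{a}}|$, one reduces $g$ by $\eps'$ on an overfilled edge $e^*$ in that shell and adds $\eps'/(2k)$ to up to $k-1$ edges with one step size decreased by one; the gaps from Proposition~\ref{weightsdecreasealot} and Lemma~\ref{weightsdecreaselittle2} ($n^{1/3}$ factors) guarantee the modified $g'$ still satisfies every supersaturation constraint while $|g'|<|g|$, contradicting minimality. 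Case~2: if $|g_{\mathbf{a}}|\leq |f_{\mathbf{a}}|$ for all $\mathbf{a}$, take the maximal $q=\sum a_i$ with strict inequality; then for all $\mathbf{b}\geq\mathbf{a}$ componentwise with $\mathbf{b}\neq\mathbf{a}$ the restrictions agree edge-by-edge (both compressed and equal in size), so $\sum_{\A\in\Phi(\P_{n,d},\mathbf{a})}\w(\A)g(\A)<\W_{\mathbf{a}}(\G_M)$, violating $M$-goodness at \emph{that} $\mathbf{a}$ --- not at $(1,\dots,1)$. So the decisive constraints are precisely those at the critical $\mathbf{a}$, and the argument needs all of them, applied at the right extremal index; your proposal does not supply this step.
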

\begin{proof}
The collection of $M$-good measured subhypergraphs forms a closed subset of the compact set $[0,1]^{E(\HH)}$, so the restriction of $|\cdot|$ to this subset attains its minimum. Hence it suffices to show that for any $M$-good $f$ we have either $|f|=|f_{\G_M}|$ or we can find an $M$-good $f'$ with $|f'|<|f|$.  Recall that $\Phi^*(\P_{n,d},a_1,\ldots,a_{k-1})$ is defined to be the collection of all $k$-chains with step sizes precisely $a_1,\ldots,a_{k-1}$ contained in $\P_{n,d}$. By Proposition~\ref{compressed} it suffices to consider those measured subhypergraphs which are completely compressed.

Let $g$ be an $M$-good measured subhypergraph. For a list of positive integers $\mathbf{a}=(a_1,\ldots,a_{k-1})$ write $g_{\mathbf{a}}$ for the restriction of $g$ to the set $\Phi^*(\P_{n,d},\mathbf{a})$, and similarly let $f_{\mathbf{a}}$ be the restriction of $f_{\G_M}$ to the same set. Let $p$ be the smallest positive integer for which there exist positive integers $a_1,\ldots, a_{k-1}$ with $\sum_i a_i=p$ such that $|g_{\mathbf{a}}|>|f_{\mathbf{a}}|$. We split into two cases according to whether such a $p$ exists or not. 

{\bf Case 1:} If such a $p$ exists then pick $a_1,\ldots,a_{k-1}$ with $\sum_i a_i=p$ and $|g_{\mathbf{a}}|>|f_{\mathbf{a}}|$.  Note that both $g_{\mathbf{a}}$ and $f_{\mathbf{a}}$ are $(\P_{n,d},\mathbf{a})$-compressed: $g_{\mathbf{a}}$ is because as said before, by Proposition~\ref{compressed} it suffices to consider completely compressed measured subhypergraphs, and $f_{\mathbf{a}}$ is because of how we defined $<^*_{\P_{n,d},\mathbf{a}}$. Note that this implies that $g_\mathbf{a}(e)\geq f_{\mathbf{a}}(e)$ for all $e\in E(\HH)$, and there exists at least one $e^*\in E(\HH)$ such that $g_{\mathbf{a}}(e^*)>f_{\mathbf{a}}(e^*)$.
Let $\eps':=g(e^*)-f(e^*).$

Define the following collection of $(k-1)$-sequences obtained from $a_1,\ldots,a_{k-1}$ by decreasing one of the $a_i$'s by one, assuming $a_i\neq 1$:  $$\A_{\mathbf{a}}:=\{(a_1,a_2,\ldots,a_{i-1},a_i-1,a_{i+1},\ldots,a_{k-1}): i\in [k-1], a_i\geq 2\}.$$
Observe that by the choice of $p$, for every $\mathbf{b}=(b_1,\ldots,b_{k-1})\in\A_{\mathbf{a}}$ and for every $e\in E(\HH)$ that corresponds to a $k$-chain with step sizes exactly $b_1,\ldots,b_{k-1}$ we have $g(e)\leq f_{\G_M}(e)$. Now for every $\mathbf{b}\in\A_{\mathbf{a}}$ pick an $e_{\mathbf{b}}\in E(\HH)$ of largest possible weight that corresponds to a $k$-chain with step sizes exactly $b_1,\ldots,b_{k-1}$ and $g(e_{\mathbf{b}})=0$ and denote the collection of these at most $k-1$ edges by $F$. Choosing such edges is possible since $\G_M$ is contained in $\P_{n,r}$ and $r\ll d$.

Define a measured subhypergraph $g'$ as follows.
\[
g'(e)=
\left\{
\begin{array}
{ll}
\frac{\eps'}{2k}  & : e\in F, \\
g(e)-\eps'  & : e=e^*, \\
g(e) & \text{otherwise.}
\end{array}
\right.
\]
Observe that $$|g'|=|g|-\eps' + |F|\frac{\eps'}{2k}\leq |g|-\eps'/2<|g|,$$
hence (recalling the first paragraph of this proof) it suffices to show that $g'$ is $M$-good. Pick any positive integers $b_1,\ldots,b_{k-1}$, and we will show that
\begin{equation}\label{supersatweneed}\sum_{(A_1,\ldots,A_k)\in\Phi(\P_{n,d},b_1,\ldots,b_{k-1})}\w(A_1,\ldots,A_k)g'(A_1,\ldots,A_k)\geq \W_{b_1,\ldots,b_{k-1}}(\G_M).
\end{equation}
 If for some $i$ we have $b_i>a_i$ then the changes we have made to $g$ did not affect this inequality, and
since $g$ was $M$-good, (\ref{supersatweneed}) still holds for $g'$. If $b_i=a_i$ for all $i\in [k-1]$ then (\ref{supersatweneed}) holds by definition of $\eps'$. Now suppose that there exists some $j\in [k-1]$ such that $b_j\leq a_j-1$. Let $e_j\in F$ be the edge defined for the sequence $(a_1,\ldots,a_{j-1},a_j-1,a_{j+1},\ldots,a_{k-1})$  above. If $h(e^*)\leq \sqrt{n}\log^{1/5}n$ then by Proposition~\ref{weightsdecreasealot} we have $\w(e^*)n^{1/3}\leq \w(e_j)$. If $h(e^*)\geq \sqrt{n}\log^{1/5}n$ then by (\ref{rupperbound}) we may take the $e_j$-s to have as small $d(e_j)$ as possible (and hence maximising their weight by Lemma~\ref{weightsdecreaselittle}) since none of the chains of height at least $\sqrt{n}\log^{1/5}n$ are present in $\G_M$, and by Lemma~\ref{weightsdecreaselittle2} we also have $\w(e^*)n^{1/3}\leq \w(e_j)$. So
$$\sum_{(A_1,\ldots,A_k)\in\Phi(\P_{n,d},b_1,\ldots,b_{k-1})}\w(A_1,\ldots,A_k)\left(g'(A_1,\ldots,A_k) - g(A_1,\ldots,A_k)\right)\geq  \frac{\eps'}{2k}\w(e_j) - \eps' \w(e^*)>0.$$
Since $g$ was $M$-good we conclude that $g'$ also satisfies (\ref{supersatweneed}) and so is $M$-good. This completes the proof of the first case.

{\bf Case 2:} For the second case we suppose such a $p$ does not exist, i.e.~for every list of positive integers $\mathbf{a}=(a_1,\ldots,a_{k-1})$ we have $|g_{\mathbf{a}}|\leq |f_{\mathbf{a}}|$. We claim that then $|g_{\mathbf{a}}|= |f_{\mathbf{a}}|$ for all sequences $a_1,\ldots,a_{k-1}$ and this will finish the proof as then $|g|=|f_{\G_M}|$.   
Suppose this is not true and let $q$ be the largest positive integer such that there exists a list of integers $\mathbf{a}=(a_1,\ldots,a_{k-1})$ with $\sum_i a_i=q$ and $|g_{\mathbf{a}}|< |f_{\mathbf{a}}|$. Pick such an $\mathbf{a}$. Note that by the choice of $q$ and since $g$ is completely compressed we have that if $\mathbf{b}=(b_1,\ldots,b_{k-1})$ is a list such that $a_i\leq b_i$ for all $i\in [k-1]$ and $e$ is any edge then $g_{\mathbf{b}}(e)= f_{\mathbf{b}}(e)$. Moreover since $|g_{\mathbf{a}}|< |f_{\mathbf{a}}|$ there exists an edge $e^*\in \Phi^*(\P_{n,d},\mathbf{a})$ such that $g_{\mathbf{a}}(e^*)<f_{\mathbf{a}}(e^*)$.  We have
\begin{equation*}
\begin{split}
\sum_{\A\in\Phi(\P_{n,d},\mathbf{a})}\w(\A)g(\A)&=
\sum_{\A\in\Phi^*(\P_{n,d},\mathbf{a})}\w(\A)g(\A) + \sum_{\mathbf{b}:\mathbf{b}>\mathbf{a}} \sum_{\A\in\Phi^*(\P_{n,d},\mathbf{b})}\w(\A)g(\A)
\\&=\sum_{\A\in\Phi^*(\P_{n,d},\mathbf{a})}\w(\A)g(\A) + \sum_{\mathbf{b}:\mathbf{b}>\mathbf{a}} \sum_{\A\in\Phi^*(\P_{n,d},\mathbf{b})}\w(\A)f_{\G_M}(\A)
\\&<\sum_{\A\in\Phi^*(\P_{n,d},\mathbf{a})}\w(\A)f_{\G_M}(\A) + \sum_{\mathbf{b}:\mathbf{b}>\mathbf{a}} \sum_{\A\in\Phi^*(\P_{n,d},\mathbf{b})}\w(\A)f_{\G_M}(\A)
\\&=\sum_{\A\in\Phi(\P_{n,d},\mathbf{a})}\w(\A)f_{\G_M}(\A)
=\sum_{\A\in\Phi(\G_M,\mathbf{a})}\w(\A)=\W_{\mathbf{a}}(\G_M).
\end{split}
\end{equation*}
Hence by Lemma~\ref{supersat} the measured subhypergraph $g$ is not $M$-good, contradicting our assumptions. This completes the proof of Proposition~\ref{mainmiddlestrong}.
\end{proof}

\begin{proof}[Proof of Proposition~\ref{mainmiddle}]
Let $\F\subseteq \P_{n,d}$ be a family of size $M$. Then $f_\F$ is $M$-good by Lemma~\ref{supersat}, hence by Proposition~\ref{mainmiddlestrong} we have $|f_{\G_M}|\leq|f_\F|$, implying by definition that $\F$  contains at least as many $k$-chains as $\G_M$.
\end{proof}

\subsection{Non-centered families in $\P_{n,d}$}
In the previous subsections we have shown that amongst families contained in $\P_{n,d}$, centered families are the best (i.e.~given the size they minimize the number of $k$-chains). In the next section our goal will be to show that an optimal family cannot contain sets from outside of $\P_{n,d}$. For that we will make use of a lemma stating that if a family of size $M$ is contained in $\P_{n,d}$, but misses some number of elements from the middle layers (and hence it is not centered) then this family contains significantly more $k$-chains than $\G_M$. This technique was used by Das--Gan--Sudakov~\cite{dasgansuda} to prove Theorem~\ref{dasgansudakovthm}.

Let $\C\subset \P_{n,r-1}$ be a family of size at most $\binom{n}{\lfloor(n+r)/2\rfloor}$. Write $\P':=\P_{n,d}\setminus \C$ and say that a measured subhypergraph $f$ is contained in $\P'$ if it assigns zero to every $k$-chain that intersects $\C$.  Define the measured hypergraph $\hat{f}_{\C,M}$, contained in $\P'$, as follows. 
\begin{itemize}
\item $\sum_{e\in \Phi^*(\P',\mathbf{a})}\hat{f}_{\C,M}(e)\w(e)=\sum_{e\in \Phi^*(\P_{n,d},\mathbf{a})}f_{\G_M}(e)\w(e)$ for all $\mathbf{a}$, and
\item $\hat{f}_{\C,M}$ is $(\P',\mathbf{a})$-compressed for all $\mathbf{a}$. 
\end{itemize}
That is, $\hat{f}_{\C,M}$ is obtained by greedily taking edges of largest possible weights, avoiding $\C$, to satisfy the definition of being $M$-good. Note that the first equality in the above definition of $\hat{f}_{\C,M}$ can be satisfied because $r\ll d$, and that $\hat{f}_{\C,M}$ is $M$-good by definition.

\begin{prop}\label{mainmiddlestrong2}
Let $0\leq t\leq \binom{n}{\lfloor(n+r)/2\rfloor}$ and let $\C$ be a family of size $t$ contained in $\P_{n,d}$. If $g$ is an $M$-good measured subhypergraph contained in $\P'=\P_{n,d}\setminus \C$ then $|g|\geq |\hat{f}_{\C,M}|$.
\end{prop}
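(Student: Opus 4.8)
The plan is to mimic the proof of Proposition~\ref{mainmiddlestrong}, replacing the ground set $\P_{n,d}$ by $\P' = \P_{n,d}\setminus\C$ and the reference function $f_{\G_M}$ by $\hat f_{\C,M}$. As in that proof, the set of $M$-good measured subhypergraphs contained in $\P'$ is a closed subset of the compact cube $[0,1]^{E(\HH)}$ (the extra constraint ``$f(e)=0$ whenever $e\cap\C\neq\emptyset$'' is closed), so $|\cdot|$ attains its minimum there; hence it suffices to show that any $M$-good $g$ contained in $\P'$ with $|g|>|\hat f_{\C,M}|$ can be replaced by an $M$-good $g'$ contained in $\P'$ with $|g'|<|g|$. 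First I would observe that the compression operator $c[\cdot,\F,\mathbf a]$ with $\F\subseteq\P'$ preserves the property of being contained in $\P'$ (it only redistributes measure among edges of $\Phi^*(\F,\mathbf a)$, all of which avoid $\C$), and by Proposition~\ref{compressed} it preserves $M$-goodness; thus it again suffices to treat completely compressed $g$, where ``completely compressed'' now means $(\P',\mathbf a)$-compressed for every $\mathbf a$.

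With $g$ completely compressed and contained in $\P'$, I would run the same two-case dichotomy on $g_{\mathbf a} := g\!\restriction_{\Phi^*(\P',\mathbf a)}$ versus $f_{\mathbf a} := \hat f_{\C,M}\!\restriction_{\Phi^*(\P',\mathbf a)}$, letting $p$ be the least value of $\sum_i a_i$ for which $|g_{\mathbf a}|>|f_{\mathbf a}|$. In Case~1 ($p$ exists), I pick such an $\mathbf a$ and, as before, find $e^*\in\Phi^*(\P',\mathbf a)$ with $g(e^*)>\hat f_{\C,M}(e^*)$; for each $\mathbf b\in\A_{\mathbf a}$ (obtained by lowering one coordinate of $\mathbf a$ by one) I pick a heaviest edge $e_{\mathbf b}\in\Phi^*(\P',\mathbf b)$ with $g(e_{\mathbf b})=0$, shift measure $\eps'$ off $e^*$ and measure $\eps'/(2k)$ onto each $e_{\mathbf b}$. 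The weight estimates of Proposition~\ref{weightsdecreasealot}, Lemma~\ref{weightsdecreaselittle} and Lemma~\ref{weightsdecreaselittle2} go through verbatim since they are statements about chains in $\P_{n,d}$, and the key point is that deleting $\C$ does not destroy the existence of a heavy enough $e_{\mathbf b}$: since $|\C|\le\binom{n}{\lfloor(n+r)/2\rfloor}$ and all of $\C$ lies in $\P_{n,r-1}$, while $r\ll d$, there is still an edge of step sizes $\mathbf b$ in $\P'$ whose height is comparable to (indeed no larger than) a minimal-height chain with those step sizes, so $\w(e^*)n^{1/3}\le\w(e_{\mathbf b})$ as needed. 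In Case~2 ($p$ does not exist), the same telescoping computation as in Proposition~\ref{mainmiddlestrong} — splitting $\Phi(\P',\mathbf a)$ into $\Phi^*(\P',\mathbf b)$ over $\mathbf b\ge\mathbf a$, using $|g_{\mathbf b}|=|f_{\mathbf b}|$ for $\mathbf b>\mathbf a$ (from maximality of $q$ and complete compression) and $|g_{\mathbf a}|<|f_{\mathbf a}|$ — shows $\W_{\mathbf a}(g)<\W_{\mathbf a}(\hat f_{\C,M}) = \W_{\mathbf a}(\G_M)$ (the last equality by the first defining property of $\hat f_{\C,M}$), contradicting $M$-goodness of $g$; hence $|g_{\mathbf a}|=|f_{\mathbf a}|$ for all $\mathbf a$ and $|g|=|\hat f_{\C,M}|$.

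The main obstacle I anticipate is bookkeeping around the ``enough room in $\P'$'' step: one must verify that after removing up to $\binom{n}{\lfloor(n+r)/2\rfloor}$ sets concentrated near the middle, the greedy construction of $\hat f_{\C,M}$ is well-defined (the first bullet in its definition is satisfiable) and that in Case~1 the replacement edges $e_{\mathbf b}\in\Phi^*(\P',\mathbf b)$ with $g(e_{\mathbf b})=0$ both exist and can be chosen with weight at least $n^{1/3}\w(e^*)$. This is exactly where the hypotheses $t\le\binom{n}{\lfloor(n+r)/2\rfloor}$, $r\le\sqrt n\log^{1/10}n$ (\ref{rupperbound}) and $d=\lfloor 10k\sqrt{n\log n}\rfloor$ enter: the layers of $\P'$ within height $O(\sqrt n\log^{1/5}n)$ of the middle that are \emph{not} already saturated by $g$ still have size $\ge 2^n/n^{O(1)}$, which dwarfs $k$ (the number of edges we need) and leaves the height-versus-weight comparisons of Lemmas~\ref{weightsdecreaselittle}–\ref{weightsdecreaselittle2} intact. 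Everything else is a faithful transcription of the argument already given for Proposition~\ref{mainmiddlestrong}.
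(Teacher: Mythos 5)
Your proposal is correct and follows exactly the same strategy as the paper's own (sketched) proof: reduce to completely $(\P',\mathbf a)$-compressed $g$ via Proposition~\ref{compressed}, then run the identical two-case dichotomy on the minimal $p$ with $|g_{\mathbf a}|>|f_{\mathbf a}|$, using the same compression step in Case~1 and the same telescoping/supersaturation contradiction in Case~2. The paper gives only a one-paragraph sketch deferring to Proposition~\ref{mainmiddlestrong}; you have merely filled in the details it leaves implicit (the replacement edges $e_{\mathbf b}$ still exist in $\P'$ because $r\ll d$ and $|\C|$ is small), which is consistent with the authors' intent.
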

\begin{proof}
The proof of this proposition will be essentially the same as the proof of Proposition~\ref{mainmiddlestrong}, therefore we only give a sketch.  By Proposition~\ref{compressed} we may assume that $g$ is $(\P',\mathbf{a})$-compressed for every list $\mathbf{a}=(a_1,\ldots,a_{k-1})$. For ease of notation, write $f:=\hat{f}_{\C,M}$ and as before,  for a list of positive integers $\mathbf{a}=(a_1,\ldots,a_{k-1})$ write $g_{\mathbf{a}}$ for the restriction of $g$ to the set $\Phi^*(\P',\mathbf{a})$, and similarly let $f_{\mathbf{a}}$ be the restriction of $f$ to the same set. Let $p$ be the smallest positive integer for which there exist positive integers $a_1,\ldots, a_{k-1}$ with $\sum_i a_i=p$ such that $|g_{\mathbf{a}}|>|f_{\mathbf{a}}|$. We split into two cases according to whether such a $p$ exists or not. If such a $p$ exists then we can find an $M$-good measured subhypergraph $g'$ contained in $\P'$ with $|g'|<|g|$ the same way as we did in the proof of Proposition~\ref{mainmiddlestrong}. If such a $p$ does not exist then we may choose the largest positive integer $q$ such that there exists a list of integers $\mathbf{a}=(a_1,\ldots,a_{k-1})$ with $\sum_i a_i=q$ and $|g_{\mathbf{a}}|< |f_{\mathbf{a}}|$. The existence of such $q$ would show that $g$ is not $M$-good and also result in a contradiction in the same fashion as in Proposition~\ref{mainmiddlestrong}, hence we conclude that $|g_{\mathbf{a}}|=|f_{\mathbf{a}}|$ for all $\mathbf{a}$ and hence $|g|=|f|$.
\end{proof}

\section{Excluding very small and very large sets}\label{smalllargesetssection}

In this section we show that an optimal family cannot contain sets from $\P(n)\setminus \P_{n,d}$. The main ideas in this section are similar to ideas in the work of Das--Gan--Sudakov~\cite{dasgansuda}. For any $j$ let $\HH_{j,\ell}$ be the $\ell$-uniform hypergraph with vertex set $V(\HH_{j,\ell})=\P_{n,j}$, and edges corresponding to $\ell$-chains. Denote $\Delta_{j,\ell}$ the maximum degree of $\HH_{j,\ell}$.

We continue our train of thought from the previous section with the following proposition:
\begin{prop}\label{tmissing}
Let $0\leq t\leq \binom{n}{\lceil \frac{n+d-1}{2}\rceil}$ and let $\C$ be a family of $t$ elements contained in $\P_{n,r-2}$. Let $s=\sum_{v\in \C}d(v,\HH_{r-2,k})$ be the sum of the degrees of vertices in $\C$ in $\HH_{r-2,k}$. If $\F\subset \P_{n,d}\setminus \C$ is a family of size $M$ then $c_k(\F)\geq c_k(\G_M)+\xyz$.
\end{prop}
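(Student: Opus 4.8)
The plan is to compare $\F$ with the "benchmark" measured subhypergraph $\hat f_{\C,M}$ supplied by Proposition~\ref{mainmiddlestrong2}, and then to account separately for the deficit caused by the missing vertices of $\C$. First I would observe that $\C\subset\P_{n,r-2}$ has size at most $\binom{n}{\lceil\frac{n+d-1}{2}\rceil}$, hence certainly at most $\binom{n}{\lfloor(n+r)/2\rfloor}$, so Proposition~\ref{mainmiddlestrong2} applies: the characteristic function $f_\F$ is $M$-good and contained in $\P'=\P_{n,d}\setminus\C$, so $c_k(\F)=|f_\F|\ge|\hat f_{\C,M}|$. The whole content of the proposition is then to show
$$|\hat f_{\C,M}|\ \ge\ c_k(\G_M)+\xyz,$$
i.e.\ that greedily re-filling the weighted supersaturation inequalities while \emph{avoiding} $\C$ forces strictly more $k$-chains than $\G_M$ contains, by an amount proportional to $s/(kn)$.

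The key step is a "weight-to-count" bookkeeping argument. Fix a step-size vector $\mathbf a$. The family $\G_M$ realizes the required weight $\W_{\mathbf a}(\G_M)$ using the $|\Phi^*(\G_M,\mathbf a)|$ heaviest $k$-chains of $\P_{n,d}$ with those step sizes; since $\G_M\subset\P_{n,r}$ and $r\ll d$, by Lemma~\ref{weightsdecreaselittle} these are exactly the chains of smallest $d(\cdot)$, and each such chain lives deep inside the middle layers, so every $k$-chain edge $e$ that either $\G_M$ uses or $\hat f_{\C,M}$ is forced to use near the middle has weight within a bounded factor of the maximal weight $w^\ast:=w^\ast(\mathbf a)$ for that step-size vector (concretely the heaviest and the $t$-th heaviest relevant chains differ in $d(\cdot)$ by $O(r)$, and Lemma~\ref{weightsdecreaselittle} makes their weight ratio $1+O(r\cdot h/n)=1+o(1)$). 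Now, when $\hat f_{\C,M}$ must avoid all chains through $\C$: for each vertex $v\in\C$, the $d(v,\HH_{r-2,k})$ many $k$-chains through $v$ that $\G_M$ would have used are deleted, so a total weight of order $d(v,\HH_{r-2,k})\cdot w^\ast$ (with each chain counted with multiplicity $k$ across step-size vectors, or more carefully, summed over $\mathbf a$) must be recovered using chains of weight at most $w^\ast$ that lie strictly outside $\G_M$. Recovering one unit $w^\ast$ of weight with such lighter chains costs at least $\approx 1$ extra edge, and doing it for all of $\C$ costs at least $\Omega(s)$ extra edges — up to the $1/(kn)$ factor, which comes from the fact that the maximal-weight $k$-chains have weight on the order of $\prod a_i!/|A_k|_{(h)}\ge (\text{something})/n^{\,h}$ and the relevant comparable-pair-type weights are $\Theta(1/n)$ when $k=2$, $h=1$; carrying the $k$ and the extremal $h$ through gives exactly the claimed $s/(kn)$. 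I would make this precise by summing the inequality
$$\sum_{e\in\Phi^*(\P',\mathbf a)}\hat f_{\C,M}(e)w(e)\ \ge\ \sum_{e\in\Phi^*(\P_{n,d},\mathbf a)}f_{\G_M}(e)w(e)$$
over all $\mathbf a$, subtracting the equalities that hold for the edges common to $\hat f_{\C,M}$ and $f_{\G_M}$, and bounding the surviving terms: the left side loses at most (number of extra edges)$\cdot w^\ast$ of "efficiency" while the right side is deficient by $\sum_{v\in\C}$ (weight of deleted chains through $v$) $\ge (1-o(1))\,w^\ast\cdot s / k$.

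The main obstacle I expect is precisely pinning down the constant and the $1/(kn)$ normalization: one must be careful that (i) the chains $\hat f_{\C,M}$ is forced to use to make up the deficit are genuinely lighter than $w^\ast$ by a definite margin (this uses that they lie outside $\P_{n,r}$, or are more eccentric, so Lemma~\ref{weightsdecreaselittle} gives a multiplicative gap of at least $1+h(\cdot)/n$), and (ii) that the $O(r)$-factor slack between the heaviest and $t$-th heaviest central chains does not swamp the gain — this is fine because $s\le t\cdot\Delta_{r-2,k}$ and $\Delta_{r-2,k}$ is itself polynomial in $n$, so the $\xyz$ bound is a genuine lower-order quantity that the crude accounting can afford to lose. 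A secondary subtlety is handling step-size vectors $\mathbf a$ with $h(\mathbf a)$ large (say $h\ge\sqrt n\log^{1/5}n$): there $\G_M$ contains \emph{no} such chains, so the corresponding inequality is $\sum\hat f_{\C,M}(e)w(e)\ge0$, trivially satisfied, and these vectors contribute nothing to either side — so they can simply be discarded, exactly as in the proofs of Propositions~\ref{mainmiddlestrong} and~\ref{mainmiddlestrong2}. Once the weighted deficit is converted to a count deficit of at least $\xyz$, combining with $c_k(\F)\ge|\hat f_{\C,M}|$ finishes the proof.
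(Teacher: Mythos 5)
Your opening move matches the paper's: invoke Proposition~\ref{mainmiddlestrong2} to reduce to showing $|\hat f_{\C,M}| - |f_{\G_M}| \geq s/(kn)$, and this reduction is correct. But the quantitative core of your argument — how the weight deficit converts into an edge-count surplus — is mis-derived, and as written it does not produce a positive lower bound.

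The problem is in the sentence ``Recovering one unit $w^\ast$ of weight with such lighter chains costs at least $\approx 1$ extra edge, and doing it for all of $\C$ costs at least $\Omega(s)$ extra edges.'' For a fixed step-size vector $\mathbf a$, if you remove a chain of weight $w$ and replace it with chains of weight at most $w' \leq w$, you recover the weight using $w/w'$ chains, so the \emph{net} count increase is $w/w' - 1$, not $\approx 1$. Your estimate $w/w' = 1 + O(rh/n) = 1 + o(1)$ (comparing the heaviest and lightest chains $\G_M$ uses) therefore yields a net gain of $o(1)$ per deleted chain, and summed over $|E| \approx s/k$ deleted chains this is $o(s/k)$ — which is \emph{not} the same as $s/(kn)$ and, crucially, is not a usable lower bound since $o(\cdot)$ conceals a possibly vanishing constant. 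You then try to recover the $1/(kn)$ by appealing to the absolute magnitude of the weights (``the relevant comparable-pair-type weights are $\Theta(1/n)$ when $k=2$''), but this is a red herring: the absolute scale of $\w(\cdot)$ cancels in the ratio $w/w'$, so it contributes nothing to the count. The $1/k$ likewise has nothing to do with weight magnitudes; it comes from the simple overcounting fact that each $k$-chain through $\C$ is counted at most $k$ times in $s = \sum_{v\in\C} d(v,\HH_{r-2,k})$, giving $|E| \geq s/k$.

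The correct source of the $1/n$ is exactly the ``subtlety'' you flag at the end but never integrate: by Lemma~\ref{weightsdecreaselittle}, every chain in $E \subseteq \P_{n,r-2}$ has weight at least $(1 + h/n) \geq (1 + 1/n)$ times that of \emph{any} replacement chain with the same step sizes, because $\P_{n,r-1} \subseteq \G_M$ forces all replacement chains to have strictly larger $d(\cdot)$. So recovering the weight of a single deleted chain costs at least $1 + 1/n$ replacement chains, for a net gain of at least $1/n$ chains per chain in $E$, hence $|\hat f_{\C,M}| - |f_{\G_M}| \geq |E|/n \geq s/(kn)$. This is the paper's argument, and it is essentially a one-line consequence of Lemma~\ref{weightsdecreaselittle} once the $|E|\geq s/k$ bound is noted. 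You had both ingredients, but the main text mis-attributes the $1/n$ to the scale of the weights and states an incorrect intermediate count of $\Omega(s)$, so the argument as written does not close.
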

\begin{proof}
By Proposition~\ref{mainmiddlestrong2} we have that $c_k(\F)\geq |\hat{f}_{\C,M}|$. Since $c_k(\G_M)=|f_{\G_M}|$  it suffices to show that $|\hat{f}_{\C,M}|-|f_{\G_M}|\geq \xyz$. Let $E$ be the collection of $k$-chains contained in $\P_{n,r-2}$ that intersect $\C$. Note that every element $e\in E$ is present in $\G_M$ but missing from $\F$, and in fact we have $\hat{f}_{\C,M}(e)=0$ and $f_{\G_M}(e)=1$. Moreover since  $\P_{n,r-1}\subseteq \G_M$, every $e\in E$ had to be replaced by edges of strictly smaller weight in $\hat{f}_{\C,M}$. By Lemma~\ref{weightsdecreaselittle} we have that $|\hat{f}_{\C,M}|\geq |f_{\G_M}|+ \frac{1}{n}\cdot |E|$. Since $|E|\geq s/k$ we get the required result.
\end{proof}

Let $A$ be a set in $\P_{n,j}$ for some $j\geq k$, and let $v$ be the vertex corresponding to $A$ in $\HH_{j,k}$. We wish to estimate the degree $d(v,\HH_{j,k})$ of $v$ in $\HH_{j,k}$. Denote the smallest and largest elements' sizes of $\P_{n,j}$ by $p_-$ and $p_+$, thats is,  $p_-=\lceil \frac{n-j+1}{2}\rceil$ and $p_+= \lceil\frac{n+j-1}{2}\rceil$. For $q\in [k]$ let $$S_q=\{\mathbf{a}=(a_1,\ldots,a_{k-1}):~ a_1+\ldots + a_{q-1}\leq |A|-p_-, ~ a_{q}+\ldots + a_{k-1}\leq p_+-|A|\}. $$ Then
$$d(v,\HH_{j,k})=\sum_{q=1}^k \sum_{\mathbf{a}\in S_q}\frac{|A|_{(a_1+\ldots +a_{q-1})} (n-|A|)_{(a_q+\ldots + a_{k-1})}} {\prod a_i!}.$$
The largest term in the second sum occurs when the enumerator has $j$ terms and the denominator is as small as possible, i.e.~when $\mathbf{a}$ is such that all $a_i\in\{\lfloor (j-1)/(k-1)\rfloor, \lceil (j-1)/(k-1) \rceil\}$ and $\sum a_i=j-1$. Let $\mathbf{a^*}=(a^*_1,\ldots,a^*_{k-1})$ be such an $\mathbf{a}$. Since $|S_q|\leq n^{k-1}$ we get
$$d(v,\HH_{j,k})\leq kn^{k-1} \frac{|A|_{(|A|-p_-)} (n-|A|)_{(p_+-|A|)}}{\prod a^*_i!}.$$
This implies that 
$$\frac{\lceil\frac{n+j-1}{2}\rceil_{(j-1)}}{\prod a^*_i!}\leq\Delta_{j,k}\leq n^k \frac{\lceil\frac{n+j-1}{2}\rceil_{(j-1)}}{\prod a^*_i!},$$
where the lower bound comes from simply counting the number of chains with step sizes precisely $\mathbf{a}^*$ containing a fixed set of size $p_+$.
Suppose $A$ is such that there exists an $\mathbf{a}$ and a $q\in [k-1]$ such that $a_1+\ldots +a_{q-1}=|A|-p_-$ and $\sum a_i = j-1$ and all $a_i\in \{\lfloor (j-1)/(k-1)\rfloor, \lceil (j-1)/(k-1) \rceil\}$. Then for $j\leq r$ we get for the corresponding $v$ that
\begin{equation}\label{degreesarebehaving}
d(v,\HH_{j,k})\geq \frac{(n/2)_{(n/2-p_-)}(n/2)_{(p_+-n/2)}}{\prod a^*_i!}\geq \Delta_{j,k}n^{-k}\left(\frac{p_-}{p_+}\right)^{r/2}\geq \Delta_{j,k}n^{-k}\left(1-\frac{r}{n/3}\right)^r\geq \Delta_{j,k}n^{-k-1}.
\end{equation}
We now show that a small change in $j$ does not change the degrees by much. Let $\mathbf{a}^{**}$ be such that all $a^{**}_i\in\{\lfloor (j)/(k-1)\rfloor, \lceil (j)/(k-1) \rceil\}$ and $\sum a^{**}_i=j$. Then
\begin{equation}\label{degreesarebehaving2}
\Delta_{j,k}\geq \frac{\lceil\frac{n+j-1}{2}\rceil_{(j-1)}}{\prod a^*_i!}\geq \frac{\lceil\frac{n+j+1}{2}\rceil_{(j)}}{\prod a^{**}_i!} n^{-1}\geq \Delta_{j+1,k}n^{-k-1}.
\end{equation}
Equipped with these bounds we are now ready to tackle the main result of this section.

\begin{prop}\label{doesntcontainsmalllargesets}
If $\F\subset \P(n)$ is a family of size $M$ with $\F\setminus \P_{n,d}\neq \emptyset$ then $c_k(\F)> c_k(\G_M)$.
\end{prop}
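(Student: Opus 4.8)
The plan is to show that if a family $\F$ of size $M$ contains a set outside $\P_{n,d}$, we can modify it into a family with strictly fewer $k$-chains, which by Proposition~\ref{mainmiddle} is enough to beat $\G_M$. Concretely, suppose $\F\setminus\P_{n,d}\neq\emptyset$ and pick a set $A\in\F$ with $||A|-n/2|$ maximal; by symmetry assume $A$ is small, i.e.\ $|A|<\lceil(n-d+1)/2\rceil$. Because $M\le(1-\eps)2^n$ and $r\le\sqrt n\log^{1/10}n\ll d$, the centered family $\G_M$ lives inside $\P_{n,r-1}$ together with a partial layer, so there is a vast supply of middle-layer sets not used by $\F$ (at least, say, $\eps 2^{n-1}$ of them in the union of the top few layers). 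The idea is to remove $A$ from $\F$ and add in a not-yet-used set $B$ from as close to the middle as possible.

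The first key step is a degree comparison: I would show that the number of $k$-chains through a set $A$ with $||A|-n/2|\ge d/2$ (say) is negligible compared to $s/(kn)$-type quantities — in fact one wants $d(v_A,\HH_{n,k})$ to be so large that replacing $A$ is forced to help, but the cleaner route is the opposite direction. Let me reorganize: the right statement is that removing $A$ destroys at least as many $k$-chains as adding $B$ creates, with strict inequality somewhere. Since every $k$-chain through $A$ has height $h\ge ||A|-n/2|-(n-d)/2+\dots$ — roughly, a $k$-chain through a very small set $A$ must climb a long way to reach the rest of $\F$ (which, after removing $A$, we can even assume is inside $\P_{n,d}$ by induction/iteration on the number of offending sets). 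So I would iterate: repeatedly replace the most extreme set of $\F$ lying outside $\P_{n,d}$ by a fresh middle-ish set, and argue the count of $k$-chains strictly decreases at the first such replacement.

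For the decisive inequality I would use the weight machinery. A $k$-chain through $A$ with $|A|$ very small and all other members in $\P_{n,d}$ has height $h(\A)\ge \lceil(n-d+1)/2\rceil-|A|$, which is large, and hence — here one needs a supersaturation/counting bound rather than a weight bound, since $c_k$ counts chains, not weighted chains — I would instead bound the number of $k$-chains through $A$ below by counting chains with prescribed near-balanced step sizes, as in the displayed estimates for $\Delta_{j,k}$ and \eqref{degreesarebehaving}, and bound the number of $k$-chains through the fresh set $B$ above by $\Delta_{r,k}$ (or $\Delta_{d,k}$) times a polynomial factor, using \eqref{degreesarebehaving2} to control how $\Delta_{j,k}$ grows with $j$. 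The point is that $\Delta_{j,k}$ grows roughly like $\binom{n/2+j/2}{\text{step}}^{\,\dots}$, which increases with $j$, so the set $A$ at distance $\ge d/2\gg r$ from the middle lies in a layer with far larger chain-degree than any middle set; removing it kills many more chains than any middle replacement creates. Combining with Proposition~\ref{tmissing} (applied with $\C$ the set of fresh middle sets we had to remove to make room, whose total degree $s$ is small since they are few) closes the loop: $c_k(\F)\ge c_k(\G_M)+s/(kn)$ from that proposition after the swap, plus the strict gain from the first swap.

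The main obstacle I expect is bookkeeping the two-sided estimate cleanly: we are comparing an exact count $c_k(\F)$ (not a weighted sum), so Lemma~\ref{weightsdecreaselittle} and Proposition~\ref{weightsdecreasealot} are not directly applicable and must be replaced by the crude but robust degree bounds $\Delta_{j,k}\le n^k\Delta_{j-1,k}\cdot(\text{stuff})$ and the lower bound \eqref{degreesarebehaving} for well-chosen layers $j\le r$. One has to be careful that after removing $A$ the rest of $\F$ really can be taken inside $\P_{n,d}$ — this is why one argues by choosing $A$ extreme and iterating from the outside in — and that the number of middle-layer sets one ultimately has to shuffle is at most $\binom{n}{\lceil(n+d-1)/2\rceil}$ so that Proposition~\ref{tmissing}'s hypotheses are met. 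Modulo these technical matters, the heart is: extreme layers have overwhelmingly larger $k$-chain-degrees than the middle $r$ layers, so an optimal family cannot afford to use them.
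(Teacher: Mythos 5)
Your high-level intuition (an extreme set ``costs'' more chains than a middle set is ``worth'') is the right one, but the crucial step in your sketch --- ``removing $A$ destroys at least as many $k$-chains as adding $B$ creates'' --- does not follow from degree estimates in $\HH_{d,k}$ or $\HH_{n,k}$, and this is where the real work lies. What you need to lower-bound is the number of $k$-chains through $A$ \emph{that lie entirely in $\F$}, not $d(v_A,\HH_{d,k})$. These are very different: $\F$ may have sizeable holes in the middle layers arranged so that very few (in principle, zero) $k$-chains through $A$ stay inside $\F$, in which case your swap does not strictly decrease $c_k$ and the iteration stalls. Your appeal to \eqref{degreesarebehaving} and \eqref{degreesarebehaving2} counts chains in the ambient hypergraph, not in $\F$, so it cannot supply the needed lower bound. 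Your use of Proposition~\ref{tmissing} is also inverted: there $\C$ should be the set of \emph{holes} $\P_{n,r'-2}\setminus\F$, whose total degree $s$ you then want to be large, not ``the fresh middle sets we had to remove.''

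The paper closes exactly this gap with a dichotomy that your proposal does not contain. Writing $M'=|\F\cap\P_{n,d}|$, it compares $\F\cap\P_{n,d}$ with $\G_{M'}$ via Proposition~\ref{tmissing} taking $\C=\A=\P_{n,r'-2}\setminus\F$, and then splits on the size of $\S\setminus\F$, where $\S$ is a fixed union of about $k$ balanced layers inside $\P_{n,r'-2}$. If $|\S\setminus\F|$ is large (at least $\beta\binom{n}{b_-+1}/n^5$), the ``holes'' term $\sum_{v\in\A}d(v,\HH_{r'-2,k})/(kn)$ alone beats $(M-M')\Delta_{r,k}$. If it is small, a normalized-matching (shadow) argument along $k-1$ of those layers shows that the set $\T$ of level-$(b_-+1)$ sets that cannot be extended to a $(k-1)$-chain inside $\F\cap\P_{n,r'-2}$ is tiny, yielding the honest lower bound $h_j\geq |R_j|\binom{n-j}{b_-+1-j}-|\T|\binom{b_-+1}{j}$ on chains through each extreme layer. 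This shadow estimate --- not a degree comparison --- is what substantiates ``the first swap strictly helps.'' Your sketch also omits the boundary case $r'=k+1$, which the paper handles separately via Theorem~\ref{dasgansudaweak}. Without the dichotomy and the shadow argument, the proposal has a genuine hole in precisely the situation where $\F$ is far from centered in its middle.
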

\begin{proof}
 Let $M':=|\F\cap \P_{n,d}|$ and define $r'$ such that $\Sigma(n,r'-1)<M'\leq \Sigma(n,r')$. Set $$b_+=\left\lceil\frac{n+r'-1}{2}\right\rceil, \qquad b_-=\left\lceil\frac{n-r'+1}{2}\right\rceil, \qquad  c_+=\left\lceil\frac{n+d+1}{2}\right\rceil,  \qquad c_-=\left\lceil\frac{n-d-1}{2}\right\rceil.$$ Note that $\P_{n,r'}=\{A\in\P(n):b_-\leq|A|\leq b_+\}$ and $\P(n)\setminus \P(n,d)=\{A\in\P(n):|A|\leq c_- \text{ or } |A|\geq c_+\}$. As 
$$\binom{n}{\leq c_-}+\binom{n}{\geq c_+} \ll \binom{n}{b_+},$$ we have $r'\in\{r-1,r\}$. Recall that by (\ref{rkplus1}) we have $r\geq k+2$ and so $r'\geq k+1$. We will assume throughout the proof that $r'\geq k+2$. The proof for the case $r'=k+1$ is very similar (in fact easier), but needs to be handled separately - we will do so later.

Let $\S$ be the family of those sets $A\in \P_{n,r'-2}$ for which there exists an $\mathbf{a}=(a_1,\ldots, a_{k-1})$ satisfying $\sum a_i = r'-3$ with $a_i\geq 1$ for all $i$, and there exists a $q\in [k]$ with $b_- + a_1 + \ldots + a_{q-1} = |A|$ and moreover $a_i\in\{\lfloor (r'-3)/(k-1)\rfloor,\lceil (r'-3)/(k-1)\rceil\}$. Note that $\S$ consists of at least $k$ complete layers in $\P_{n,r'-2}$ (corresponding to splitting up the distance between $b_-$ and $b_+$ into $k-1$ roughly equal pieces). Observe that we used the fact that $r'\geq k+2$ here.

 Let $\A:=\P_{n,r'-2}\setminus \F$. For $j\in I=\{0,\ldots,c_-\}\cup \{c_+\ldots,n\}$ let $R_j=\F\cap\binom{[n]}{j}$ and let $h_j$ denote the number of $k$-chains in $\F$ which contain an element of $R_j$ and $k-1$ elements from $\P_{n,r'-2}$.  Hence we have by Proposition~\ref{tmissing} that
$$c_k(\F)\geq c_k(\G_{M'})+\frac{\sum_{v\in \A}d(v,\HH_{r'-2,k})}{kn}+\sum_{j\in I} h_j.$$ Note that $c_k(\G_{M})-c_k(\G_{M'})\leq \left(M-M'\right)\Delta_{r,k}$, so it suffices to show that
\begin{equation}\label{sudastyleineq}\frac{\sum_{v\in \A}d(v,\HH_{r'-2,k})}{kn}+\sum_{j\in I}h_j>(M-M')\Delta_{r,k}=\sum_{j\in I}|R_j|\Delta_{r,k}.
\end{equation}
 W.l.o.g.~we assume that $\sum_{j\leq c_-}|R_j|\geq \sum_{j\geq c_+}|R_j|$, the proof otherwise is identical. From now on we always assume $j\in [c_-]$, the extra factor of $2$ will be dominated by larger terms in our inequalities. Define $\beta$ by
$$\beta \binom{n}{\leq c_-}=\sum_{j\leq c_-}|R_j|.$$
Now we split into two cases. For the first case assume that $|\S\setminus \F|\geq \beta\binom{n}{b_-+1}/n^5$. Then by~(\ref{degreesarebehaving}) and (\ref{degreesarebehaving2}) we get
$$\frac{\sum_{v\in \A}d(v,\HH_{r'-2,k})}{kn}\geq \beta\binom{n}{b_-+1}    \Delta_{r'-2,k}  n^{-k-10}\geq \beta\binom{n}{b_-+1}\Delta_{r,k}n^{-10k}.$$
Now note that
$$\binom{n}{\leq c_-}\leq n^{-10k^2}\binom{n}{b_-+1},$$ 
and hence (\ref{sudastyleineq}) holds in this case. 

Henceforth we assume $|\S\setminus \F|\leq \beta\binom{n}{b_-+1}/n^5$.  Let $\T$ be the family of those sets in $\binom{[n]}{b_-+1}$ which are not contained in any $(k-1)$-chains in $\F\cap \P_{n,r'-2}$. In other words, if $A\in\T$ then every $(k-1)$-chain in $\S$ containing $A$ intersects $\S\setminus \F$. Recall that $\S$ contains at least $k$ complete layers and let $\S'$ denote the bottom $k-1$ layers from $\S$, so that $\S'$ contains all sets of sizes $b_-+1=s_1 < s_2 <\ldots < s_{k-1} \leq b_+-1$. For all $i\in [k-1]$, write $\Q_i := (\S'\setminus \F)\cap \binom{[n]}{s_i}$. Let $\T_1:=\T\setminus \Q_1$ and for $i\in [k-1]\setminus \{1\}$ define $\T_i :=\partial (\T_{i-1},s_i)\setminus \Q_i$, where $\partial(\T_{i-1},s_i)$ denotes the family of sets $A\in\binom{[n]}{s_i}$ for which there exists a set $B\in \T_{i-1}$ such that $B\subset A$ (i.e.~the upper shadow of $\T_{i-1}$ on level $s_i$). Since every $(k-1)$-chain in $\S'$ that intersects $\T$ has to intersect $\S'\setminus \F$, we conclude that $\T_{k-1}=\emptyset$. For all $i\in [k-1]$ define $q_i := |\Q_i|\binom{n}{s_i}^{-1}$ and similarly $t_i:=|\T_i|\binom{n}{s_i}^{-1}$. By the normalized matching property\footnote{In our context this means that (for all $p$), whenever $\C\subseteq\binom{[n]}{p}$ and $\C^*$ is the subset of $\binom{[n]}{p+1}$ consisting of the elements of $\binom{[n]}{p+1}$ covering elements of $\C$, it holds that $|\C|\binom{n}{p}^{-1}\leq |\C^*|\binom{n}{p+1}^{-1}$.} of the Boolean lattice we have the following inequalities:
\begin{itemize}
\item $t_i\leq q_{i+1} + t_{i+1}$ for all $i\in [k-3]$, and
\item $t_{k-2}\leq q_{k-1}$.
\end{itemize}
By summing up all these inequalities we conclude that $t_1 + q_1\leq q_1 + q_2 + \ldots + q_{k-1}$, which since $s_{k-1}\leq b_+-1$ implies that $|\T|\leq 3\left(|\Q_1|+|\Q_2|+\ldots + |\Q_{k-1}|\right)=3|\S'\setminus \F|\leq 3|\S\setminus \F|\leq \beta\binom{n}{b_-+1}/n^4$.

Now we have the bound
\begin{equation}\label{referringforjozsi}
h_j \geq |R_j|\binom{n-j}{b_-+1-j}-|\T|\binom{b_-+1}{j}.
\end{equation}
For $j\in [c_-]$ define $\beta_j$ by $|R_j|=\beta_j\binom{n}{j}$. Using $\binom{n}{b_-+1}\binom{b_-+1}{j}=\binom{n}{j}\binom{n-j}{b_-+1-j}$ and that $h_j\geq 0$ we get
\begin{equation*}
\begin{split}
h_j &\geq \max\left\{0,\beta_j\binom{n}{j}\binom{n-j}{b_-+1-j}-\frac{\beta}{n^4}\binom{n}{j}\binom{n-j}{b_-+1-j}\right\}\\ &\geq \max\left\{0,\beta_j\binom{n}{j}\binom{n-c_-}{b_-+1-c_-}-\frac{\beta}{n^4}\binom{n}{j}\binom{n-c_-}{b_-+1-c_-}\right\}.
\end{split}
\end{equation*}
Since $\sum_{j\leq c_-}\beta_j\binom{n}{j}=\beta\binom{n}{\leq c_-}$ we have
$$\sum_{j\leq c_-}h_j\geq \binom{n-c_-}{b_-+1-c_-}\left( \sum_{j\leq c_-} \binom{n}{j}\beta_j - \sum_{j\leq c_-}\binom{n}{j}\frac{\beta}{n^4}\right)\geq \frac{1}{4}\binom{n-c_-}{b_-+1-c_-}\beta\binom{n}{\leq c_-}.$$
To complete the proof it only remains to show that $\binom{n-c_-}{b_-+1-c_-}\gg \Delta_{r,k}$, as then (\ref{sudastyleineq}) holds. Note that $\Delta_{r,k}\leq n^{k+r}$ - indeed, there are at most $n^k$ ways to choose the sizes of the $k$ sets in a $k$-chain, and there are at most $n^r$ distinct $r$-chains through a fixed set in $\P_{n,r}$. Moreover we have
$$\binom{n-c_-}{b_-+1-c_-}\geq \binom{n/2}{4k\sqrt{n\log n}}\geq n^{k\sqrt{n\log n}}\geq n^{2(k+r)},$$
and the proof is complete.
\end{proof}

All that is missing now is the case $r'=k+1$ of Proposition~\ref{doesntcontainsmalllargesets}.
Fortunately when $r'=k+1$ we can directly apply the results of Das--Gan--Sudakov~\cite{dasgansuda}. Recall that $M'=|\F'|$.
\begin{thm}[\label{dasgansudaweak}Corollary of Theorem $4.2$ of \cite{dasgansuda}]
Let $\F'\subset \P_{n,d}$ be a family of size $\Sigma(n,k)\leq |\F'|\leq \Sigma(n,k+1)$ with at least $t$ sets missing from the middle $k-1$ levels. Then
$$c_k(\F')\geq c_k(\G_{M'})+\frac{t}{n}\Delta_{k+1,k}.$$
\end{thm}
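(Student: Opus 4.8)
The statement to prove is Theorem~\ref{dasgansudaweak}, described as a ``Corollary of Theorem 4.2 of \cite{dasgansuda}''. Since this is explicitly flagged as a corollary of an external result, my plan is not to reprove the Das--Gan--Sudakov machinery from scratch but to show how their stability result for the regime $\Sigma(n,k)\leq |\F'|\leq \Sigma(n,k+1)$ specializes to the clean lower bound claimed here. I will first recall the precise form of \cite[Theorem 4.2]{dasgansuda}: it gives a supersaturation-type bound showing that if a family $\F'$ in this size range is missing $t$ sets from the middle $k-1$ layers (equivalently, $\F'$ deviates from being centered by $t$ on those layers), then the number of $k$-chains exceeds the centered minimum $c_k(\G_{M'})$ by a term proportional to $t$ times the relevant maximum degree. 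The key point is that in this size window the ``new'' $k$-chains forced by each missing middle-layer set, when replaced by chains through a neighboring layer, each cost at least a factor related to $\Delta_{k+1,k}/n$, exactly as in the weight-loss argument of Lemma~\ref{weightsdecreaselittle} and Proposition~\ref{tmissing} above.

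The plan for the main steps is as follows. First, reduce to the case $M'\le \Sigma(n,k+1)$ and $r'=k+1$, so that a centered family of size $M'$ fully contains the middle $k-1$ layers and a fraction of the $(k)$-th and $(k+1)$-th layers from the middle; this is the hypothesis. Second, identify the $t$ sets missing from the middle $k-1$ layers and note that each such missing set $A$, had it been present, would lie in roughly $d(v_A,\HH_{k-1,k})$ many $k$-chains using only middle-$(k-1)$-layer sets — but more relevantly, in the centered family these $k$-chains are present, so their absence must be compensated by the family picking up other sets (further from the middle), which create $k$-chains of strictly larger count-per-set or, after the weight bookkeeping, cost at least $\Delta_{k+1,k}/n$ each. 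Third, invoke \cite[Theorem 4.2]{dasgansuda} directly: it is precisely the statement that in this size range, deviating from centered by $t$ sets on the middle $k-1$ layers forces at least $\frac{t}{n}\Delta_{k+1,k}$ extra $k$-chains. Assembling these gives the displayed inequality.

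The only genuine content to check is that the quantity appearing in \cite[Theorem 4.2]{dasgansuda} — which is phrased in their notation, possibly in terms of $\Sigma(n,k)$-normalized deficiencies or in terms of $N_k$ counts on specific layers — really does dominate $\frac{t}{n}\Delta_{k+1,k}$. Here $\Delta_{k+1,k}$ is the maximum degree of the $k$-chain hypergraph on $\P_{n,k+1}$, and by the estimates around~(\ref{degreesarebehaving}) and (\ref{degreesarebehaving2}) this is comparable (up to $n^{O(k)}$ factors, which are harmless since we only need a $\gg$-type or explicit constant bound in the application in Proposition~\ref{doesntcontainsmalllargesets}) to the number of $k$-chains through a fixed set in the outermost layer of $\P_{n,k+1}$. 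So the translation amounts to: (i) matching their ``extremal configuration'' for this size window with our $\G_{M'}$, which holds because centered families minimize $c_k$ here by the Das--Gan--Sudakov theorem itself (Theorem~\ref{dasgansudakovthm}); and (ii) checking their stability increment is at least $\frac{t}{n}\Delta_{k+1,k}$, which follows from their proof since the cheapest way to recover $t$ missing middle sets is to add $t$ sets to an adjacent layer, and each such addition, after the compression/weight argument, creates at least $\Delta_{k+1,k}/n$ new $k$-chains (the factor $1/n$ coming exactly from the weight-ratio bound $\w(\B)/\w(\A)\ge 1+h(\A)/n$ of Lemma~\ref{weightsdecreaselittle}).

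I expect the main obstacle to be purely expository: stating \cite[Theorem 4.2]{dasgansuda} in a form close enough to the claimed corollary that the deduction is a one-line specialization, while being careful that their normalization of the ``number of missing sets'' and their extremal count agree with ours. No new combinatorial idea is needed — the argument is exactly the $r'=k+1$ instance of what Proposition~\ref{doesntcontainsmalllargesets} does for $r'\ge k+2$, with \cite{dasgansuda} supplying the base estimate in place of Proposition~\ref{tmissing}. Accordingly the write-up will be short: recall their theorem, observe $r'=k+1$ forces the middle $k-1$ layers to be exactly the layers where the deficiency is measured, and conclude.
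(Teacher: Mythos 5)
The paper does not actually supply a proof of this theorem: it is stated, as the bracketed title indicates, as a direct consequence of Theorem~4.2 of Das--Gan--Sudakov, and the authors move on immediately to using it. So there is no internal argument to compare against, and your recognition that the right move is to cite and specialize that external result is the same ``approach'' the paper takes.

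That said, as a proof attempt your proposal has a real gap. You never state Theorem~4.2 of \cite{dasgansuda}, and the entire logical content of the corollary is the numerical translation from whatever quantity appears there to the explicit lower bound $\frac{t}{n}\Delta_{k+1,k}$. You identify this yourself (``the only genuine content to check is that the quantity appearing in [Theorem~4.2] \dots really does dominate $\frac{t}{n}\Delta_{k+1,k}$''), but you assert the check rather than perform it. In particular your step three (``invoke [Theorem~4.2] directly: it is precisely the statement that \dots'') is a claim, not a verification. Moreover, the heuristic justification you give --- that each recovered missing middle set costs $\Delta_{k+1,k}/n$ via the weight-ratio bound $\w(\B)/\w(\A)\ge 1+h(\A)/n$ of Lemma~\ref{weightsdecreaselittle} --- is the language of \emph{this} paper's weighted-chain/SCD machinery, not of \cite{dasgansuda}. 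Their Theorem~4.2 is proved by a different (shifting/concentration) argument and is stated with a different normalization, so you cannot simply carry the $1/n$ factor over by analogy; you would need to match their exact extremal count against $\Delta_{k+1,k}$ as defined here (the maximum degree in $\HH_{k+1,k}$), for which the degree estimates around (\ref{degreesarebehaving}) and (\ref{degreesarebehaving2}) are the relevant tools. To close the gap, state Theorem~4.2 of \cite{dasgansuda} explicitly and do the arithmetic comparison; everything else in your plan is correct framing.
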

\begin{proof}[Proof of Proposition~\ref{doesntcontainsmalllargesets} in the case $r'=k+1$]
We follow the notation of the proof of Proposition~\ref{doesntcontainsmalllargesets}. W.l.o.g.~assume that $\sum_{j\leq c_-}|R_j|\leq \sum_{j\geq c_+}|R_j|$, the proof otherwise is identical. Similarly to the proof of Proposition~\ref{doesntcontainsmalllargesets} define $h_j$ to be the number of $k$-chains in $\F$ which contain an element of $R_j$ and $k-1$ elements from $\P_{n,k-1}$. Setting $\F':=\F\cap\P_{n,d}$ and $t:=|\P_{n,k-1}\setminus \F'|$ and applying Theorem~\ref{dasgansudaweak} we get that
$$c_k(\F)\geq c_k(\G_{M'})+\frac{t}{n}\Delta_{k+1,k}+\sum_{j\geq c_+}h_j,$$
and hence as before it suffices to show that
$$\frac{t}{n}\Delta_{k+1,k}+\sum_{j\geq c_+}h_j\geq \left(M-M'\right)\Delta_{k+1,k}.$$
If $t\geq n\left(M-M'\right)$ then this inequality holds as each $h_j$ is non-negative, hence we may assume $t\leq n\left(M-M'\right)\leq 2n\sum_{j\geq c_+}|R_j|$. In this case we will in fact show that
$$\sum_{j\geq c_+}h_j\geq \left(M-M'\right)\Delta_{k+1,k}.$$ 
Following the notation of~\cite{dasgansuda}, let $a=\lceil\frac{n+k}{2}\rceil$ so that the $k-1$ middle levels are those sets of sizes between $a-k+1$ and $a-1$. As before in (\ref{referringforjozsi}), for $j\geq c_+$ we have the lower bound
$$h_j\geq \max\bigg\{|R_j|\binom{j}{a-1}-t\binom{n-a+1}{j-a+1},0\bigg\}\binom{a-1}{k-2}(k-2)!.$$
Now observe that for $j\geq c_+$ we have
$$\binom{j}{a-1}\binom{n-a+1}{j-a+1}^{-1}=\frac{j!(n-j)!}{(a-1)!(n-(a-1))!}\geq \left(1+\frac{10k\sqrt{n\log n}}{n}\right)^{4k\sqrt{n\log n}}\geq n^{20k^2}.$$
Hence it suffices to show
$$\sum_{j\geq c_+}\max\bigg\{|R_j|-\frac{\sum_{i\geq c_+}|R_i|}{n^{19k^2}},0\bigg\}\binom{j}{a-1}\binom{a-1}{k-2}(k-2)!\geq 2\Delta_{k+1,k}\sum_{j\geq c_+}|R_j|.$$
Now since
$$\Delta_{k+1,k}=\left(\binom{a}{k-1}+\binom{a}{k}\binom{k}{2}\right)(k-1)!\leq n^5\binom{a-1}{k-2}(k-2)!,$$
and since for every $j\geq c_+$ we have $\binom{j}{a-1}\geq n^{10}$, it is enough to show
$$n^4\sum_{j\geq c_+}\left(|R_j|-\frac{\sum_{i\geq c_+}|R_i|}{n^{19k^2}}\right)\geq \sum_{j\geq c_+}|R_j|.$$
However the left hand side is at least
$$n^4\sum_{j\geq c_+}\left(|R_j|-\frac{\sum_{i\geq c_+}|R_i|}{n^{19k^2}}\right)\geq n^4\sum_{j\geq c_+}|R_j| - \frac{n^5}{n^{19k^2}}\sum_{j\geq c_+}|R_j|\gg \sum_{j\geq c_+}|R_j|,$$
and the proof is complete.
\end{proof}

\section{Proof of Theorem~\ref{mainthm}}
Let $\F$ be a family of size $M$. If $\F\not\subset\P_{n,d}$ then by Proposition~\ref{doesntcontainsmalllargesets} we have $c_k(\F) > c_k(\G_M)$. If on the other hand $\F\subseteq\P_{n,d}$ then by Proposition~\ref{mainmiddle} we have $c_k(\F)\geq c_k(\G_M)$. Hence $\G_M$ minimizes the number of contained $k$-chains amongst families of size $M$ in $\P(n)$, and the proof is complete.

\section{Open problems}

The main open problem that remains to be solved is of course Kleitman's conjecture, Conjecture~\ref{kleitmanconjecture}. Observe that throughout this paper we heavily relied on the fact that $n\gg k$, and most of the methods would break down if $k$ was allowed to be comparable to $n$. It seems that new ideas are needed to tackle these cases, and any partial results on this problem are likely to get us closer to solving Kleitman's conjecture in its full generality.
It would be interesting to have a proof of Kleitman's conjecture for large $n$, under the assumption  $k\geq n/10$ (or even $k \geq n-100$).

Another direction that might be of interest is to extend the question of minimizing the number of $k$-chains to other posets, hence generalizing Kleitman's question. Instead of considering families in $\P(n)=\{0,1\}^n$ one could ask the same questions for $[m]^2$ or even $[m]^d$. A $k$-chain in $[m]^d$ is a set of $k$ distinct points satisfying $\mathbf{a}_1\leq \ldots \leq \mathbf{a}_k$ (where $\mathbf{b}\leq \mathbf{c}$ means $b_i\leq c_i$ for all $i\in [d]$). Solving the following problem in full generality seems hopeless (in particular it contains Kleitman's question as a special case where $m=2$), but partial results for larger $m$ would be of much interest. Is a similar phenomenon as in Kleitman's conjecture likely to hold for these posets as well?

\begin{proble}\label{generalquestion}
Given $d,m,M,k$, which sets $\F\subseteq [m]^d$ of size $|\F|=M$ minimize the number of $k$-chains?
\end{proble}

Consider the following definition of an $m$\emph{-centered} set: a set $\F\subseteq [m]^d$ is $m$-centered if for all $\mathbf{a},\mathbf{b}\in [m]^d$ with $\mathbf{a}\in \F$ and $\mathbf{b}\notin \F$ we have that $$\left\lvert\sum_{i=1}^d a_i - \frac{dm}{2}\right\rvert \leq \left\lvert\sum_{i=1}^d b_i - \frac{dm}{2}\right\rvert,$$ and in case of equality we have $\sum a_i \geq \sum b_i$. Note that taking $m=2$ we get our usual definition of centered families. The following conjecture is not much more than a natural guess, as we have little evidence supporting it. Once again we do not make the (false) claim that $m$-centered sets are the \emph{only} ones minimizing the number of $k$-chains.  

\begin{conj}
Given $m$ there exists a number $d_0(m)$ such that if $d\geq d_0(m)$ then the answer to Problem~\ref{generalquestion} is given by $m$-centered sets.
\end{conj}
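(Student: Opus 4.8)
The plan is to mirror the two-part architecture of the present paper, now with the chain product poset $[m]^d$ in place of $\P(n)=\{0,1\}^d$. Write $\mathrm{rk}(\mathbf{a})=\sum_{i=1}^d a_i$ for the rank of $\mathbf{a}\in[m]^d$, so that the ranks fill an interval of length $d(m-1)$ symmetric about the central rank, the level sizes are the coefficients of $(1+x+\cdots+x^{m-1})^d$, and an $m$-centered set of size $M$ is exactly a union of consecutive central levels together with an initial segment of one neighbouring level. The first part would show that among families contained in the middle $O(\sqrt{d\log d})$ levels the $m$-centered family $\G_M$ minimises the number of $k$-chains; the second part would show that an optimal family cannot use levels far from the centre. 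The role of the hypothesis $d\geq d_0(m)$ is that in this regime the central band of $[m]^d$ behaves, by the local central limit theorem, very much like the middle layers of a Boolean lattice: the level sizes are log-concave with consecutive ratios $1+o(1)$, the band of levels carrying all but an $\eps$-fraction of the mass has width $O(\sqrt{d\log d})$, and the poset is rank-symmetric and rank-unimodal.

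For the first part one first needs a symmetric chain decomposition of $[m]^d$, which exists by de~Bruijn--Tengbergen--Kruyswijk~\cite{scd} (they proved it for arbitrary products of chains, not merely $\{0,1\}^d$). One then defines the weight $\w(\A)$ of a chain $\A$ in $[m]^d$ to be the probability that $\A$ lies in a uniformly random SCD, and aims to reprove the three structural facts used here: that weights drop by a factor $d^{\Omega(h(\B)-h(\A))}$ when heights increase while staying below the $\sqrt{d}\,\mathrm{polylog}$ scale (the analogue of Proposition~\ref{weightsdecreasealot}), that weights drop by a $1+\Omega(h(\A)/\mathrm{poly}(d))$ factor as a chain moves away from the centre (the analogue of Lemma~\ref{weightsdecreaselittle}), and that the weighted supersaturation bound $\W_{\mathbf{a}}(\F)\geq\W_{\mathbf{a}}(\G_{|\F|})$ holds. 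The supersaturation argument should transfer essentially unchanged: symmetric chains in $[m]^d$ are skipless and balanced about the centre, an $m$-centered set meets each symmetric chain in a skipless subchain, the number of $k$-subchains with prescribed step sizes in a chain of length $p$ is a convex function of $p$, and the multiset of intersection lengths with $\G_Q$ is ``as even as possible''; averaging over a random SCD then yields the inequality exactly as in Lemma~\ref{supersat}. Granting these, the compression machinery of Section~3 --- measured subhypergraphs, the operations $c[f,\F,\mathbf{a}]$, and Propositions~\ref{compressed}, \ref{mainmiddlestrong}, \ref{mainmiddlestrong2} --- is phrased purely in terms of the weight function and its qualitative properties, so it should carry over with only notational changes, giving the analogue of Proposition~\ref{mainmiddle}. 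The second part would likewise follow Section~4: bound the degrees in the $k$-chain hypergraph on the central band, invoke the normalized matching (LYM) property of $[m]^d$ --- classical for products of chains --- to run the shadow argument showing that omitting $t$ central elements already forces $\Omega(s/(kd))$ extra $k$-chains, and finally weigh this gain against the cost $\le(M-M')\Delta$ of the elements placed on extreme levels, exactly as in Propositions~\ref{tmissing} and \ref{doesntcontainsmalllargesets}, with the coefficients of $(1+\cdots+x^{m-1})^d$ replacing binomial coefficients throughout the degree and shadow estimates.

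The main obstacle I anticipate is the weight function. For $\{0,1\}^d$ the probability that a chain lies in a random SCD telescopes into the clean product formula of Proposition~\ref{weightformula}, because conditioning on membership at one level leaves a uniform choice among binomially many predecessors; for $[m]^d$ the SCDs produced by the bracket-sequence construction are more rigid and a uniformly random SCD is far less transparent, so it is not clear that $\w(\A)$ admits a workable closed form. Two plausible fixes: replace ``uniform over all SCDs'' by a structured distribution --- for instance the recursively generated SCDs of the product construction, or a product measure across coordinates --- for which the membership probabilities do telescope; or abandon SCDs altogether and simply \emph{define} $\w(\A)$ by an explicit formula (a natural candidate being products of ratios of consecutive central level sizes, or products of local cover multiplicities) and verify the three required properties by hand. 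A secondary difficulty is that, unlike in the Boolean case treated here, there is no analogue of the Das--Gan--Sudakov theorem to anchor the small-$M$ regime, so the argument must be made self-contained for all $M$. And of course the whole programme can only succeed for $d$ large compared with $m$: in low dimension, e.g.\ the grid $[m]^2$, the families minimising the number of $k$-chains need not resemble $m$-centered sets at all, which is precisely why the conjecture is stated only for $d\geq d_0(m)$.
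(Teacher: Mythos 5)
The statement you were asked to prove is stated in the paper as an open conjecture, not as a theorem: the authors explicitly call it ``not much more than a natural guess'' with ``little evidence supporting it,'' and they record a counterexample for $d$ small relative to $m$. There is therefore no proof in the paper to compare your plan against, and a complete argument would be genuinely new.

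Your outline correctly transposes the paper's two-part strategy to $[m]^d$: a weighted supersaturation and compression argument on the central band of levels, followed by a degree-and-shadow argument excluding extreme levels. But the keystone --- the weight function --- does not transfer, and you flag this as ``the main obstacle'' without resolving it. The reason the probability that a chain lies in a uniformly random SCD telescopes in Proposition~\ref{weightformula} is that the automorphism group $S_n$ of $\P(n)$ acts transitively on the $\ell$-subsets of a fixed $r$-set, so conditioning on a symmetric chain passing through $A_i$ leaves a uniform distribution over the $\binom{|A_i|}{|A_{i-1}|}$ candidate predecessors. For $[m]^d$ the automorphism group is much smaller (coordinate permutations together with coordinatewise order-reversal) and does not act transitively on the elements of a given rank below a fixed $\mathbf{a}\in[m]^d$; for instance the stabiliser of $(2,1)\in[3]^2$ is trivial, and there is no symmetry identifying $(1,0)$ with $(0,1)$ conditional on the chain passing through $(2,1)$. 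Hence the conditional probabilities need not depend only on the rank differences, and the closed formula fails. Without a workable $\w$, Proposition~\ref{weightsdecreasealot}, Lemmas~\ref{weightsdecreaselittle} and~\ref{weightsdecreaselittle2}, and Lemma~\ref{supersat} cannot even be formulated, and the compression machinery of Propositions~\ref{compressed}, \ref{mainmiddlestrong}, \ref{mainmiddlestrong2} has nothing to run on. The fixes you gesture at (a structured SCD distribution, or an axiomatically defined weight) are reasonable starting points, but neither is carried out, and it is far from clear that any one choice will deliver all three required properties simultaneously.

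Two further gaps you partly acknowledge but do not close. First, the paper's Section~\ref{smalllargesetssection} relies on Theorem~\ref{dasgansudaweak}, a corollary of Das--Gan--Sudakov, to anchor the boundary case $r'=k+1$; no analogue exists for $[m]^d$, and proving a substitute would itself be a substantial result. Second, unlike in $\P(n)$, the degree of a vertex in the $k$-chain hypergraph over $[m]^d$ depends not merely on its rank but on the automorphism orbit of the element (the number of saturated chains from $\mathbf{0}$ up to $\mathbf{a}$ is a multinomial coefficient in $a_1,\ldots,a_d$, not a function of $\sum_i a_i$ alone). The degree estimates analogous to~(\ref{degreesarebehaving}) and~(\ref{degreesarebehaving2}) and the construction of the family $\S$ in the proof of Proposition~\ref{doesntcontainsmalllargesets} would therefore need a uniformity statement across orbits at a given rank, which you have not addressed. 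In short, what you have is a plausible research plan, not a proof.
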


Note that if we do not assume $d$ to be large enough then this natural conjecture might fail. One small counterexample is given by the case $m=16, d=2, k=2$ where the family $\F:=\{(a_1,a_2)\in [16]^2 : |a_1+a_2-16|\leq 5\}$ can be improved by letting $\F' := \F \setminus \{(5,6)\} \cup \{(10,0)\}$.

Instead of the poset $\{0,1\}^n$ we can consider the poset $[0,1]^n$. Given a subset $\F\subseteq [0,1]^n$ let $\C(\F,k)$ be the collection of $k$-chains in $\F$ (where a $k$-chain, as before, is a set of $k$ points satisfying $\mathbf{a}_1\leq \ldots \leq \mathbf{a}_k$). Then $\C(\F,k)$ can be regarded as a subset of $\left([0,1]^n\right)^k$. This leads to the following natural question. By the \emph{measure} of a set $A\subset \mathbf{R}^n$ we always refer to the Lebesgue measure (or $n$-volume) of $A$ and denote it by $\lambda(A)$.

\begin{proble}\label{continuousproblem}
Given $n,M,k$, which measurable $A\subseteq [0,1]^n$ of measure $M$ minimizes the volume of $k$-chains, i.e.  $\lambda\left(\C(\F,k)\right)$?
\end{proble}

Consider the first non-trivial case, i.e. $n=k=2$. For $\mathbf{x}\in [0,1]^2$ define $M(\mathbf{x}):=\{\mathbf{y}\in A: \mathbf{x}\leq \mathbf{y}\}$. Let $S(A) := \{\mathbf{x}\in A : \nexists \mathbf{y}\in A : \mathbf{y}\leq \mathbf{x}\}$. Then it seems that in one of the optimal sets $A$ the function $f(x):=\lambda(M(x))$ should be constant on $\S$.
Giving a nice description of the optimal set $A$ in Problem~\ref{continuousproblem} may well turn out to be difficult. It may be possible to determine the limiting structure of the solution as $n,M$ remain fixed and $k$ grows to infinity. Alternatively, estimates on the minimal volume of $k$-chains might be of interest and easier to obtain. Let $f(n,M,k):=\inf\{\lambda(\C(A,k)): A\subseteq [0,1]^n,~ \lambda(A)=M\}$, where the infimum is taken over all measurable subsets $A$. 

\begin{proble}
Determine the value of $f\left(2,\frac12,2\right)$.
\end{proble}

\section{Acknowledgements}

We are very grateful to Jonathan Noel for pointing out that one of the open problems originally raised in this paper had already been considered by others.

\end{document}